\documentclass[np]{nyjm}
\usepackage[T1]{fontenc} % http://tex.stackexchange.com/questions/664/why-should-i-use-usepackaget1fontenc
\usepackage[utf8x]{inputenc}
\usepackage[usenames,dvipsnames]{xcolor}
\setlength{\textheight}{215mm}
\setlength{\textwidth}{160mm}
\setlength{\oddsidemargin}{0pt}
\setlength{\topmargin}{-20pt}
\usepackage{wrapfig}
\setlength{\textwidth}{160mm}
\setlength{\evensidemargin}{0pt}
\usepackage{tkz-tab}
\usepackage{latexsym}
\usepackage{caption}
\usepackage{subcaption}
\usepackage{subfig}
\usepackage{graphicx, tikz}
\usetikzlibrary{calc}
%\tikzset{mynode/.style={inner sep=1pt, fill ,outer sep=1pt,circle}}
\tikzstyle{stuff_fill}=[circle,draw,fill=black,font={A}]
\usepackage{amsmath}
\usepackage{marginnote}
\usepackage{color}
\newcommand\numberthis{\addtocounter{equation}{1}\tag{\theequation}}
\usepackage{caption}
\colorlet{lightpink}{pink!35!white}
\colorlet{lightyellow}{yellow!30!white}
\colorlet{lightpurple}{purple!20!white}
\colorlet{lightblue}{blue!15!white}

\numberwithin{equation}{section}
\theoremstyle{plain}
\newtheorem{thm}{Theorem}[section]
\newtheorem*{thm*}{Theorem}
\newtheorem*{ack}{Acknowledgement}
\newtheorem{lem}[thm]{Lemma}

\newtheorem{cor}{Corollary}
\theoremstyle{definition}

\theoremstyle{remark}
\newtheorem*{rem}{Remark}

\newcommand{\h}{\mathbb H}
\newcommand{\R}{\mathbb R}
\newcommand{\Z}{\mathbb Z}
\newcommand{\Q}{\mathbb Q}
\allowdisplaybreaks[4]

\newcommand{\BigOm}[1]{\ensuremath{\operatorname{O}\left(#1\right)}}

\newcommand{\BigOIe}[1]{\ensuremath{\operatorname{O}_{I,\epsilon}\left(#1\right)}}
\newcommand{\BigO}[1]{\ensuremath{\operatorname{O}_I\left(#1\right)}}

\newcommand{\defeq}{\mathrel{\mathop:}\mathrel{\mkern-1.2mu}=} % Produces better-looking :=
\makeatletter
\def\pmod#1{\allowbreak\mkern10mu({\operator@font mod}\,\,#1)} % Shortens \pmod space
\makeatother

%%% END OF PREAMBLE %%%

\begin{document}
\subjclass[2010]{37A17, 11B57, 37D40}
\keywords{Ford circles, Farey Fractions, distribution}
%Horocycle Flow.}

 \title{Geometry of Farey-Ford Polygons}
 
\author[Athreya]{Jayadev Athreya}
\address{Department of Mathematics, University of Illinois, 1409 West Green Street, Urbana, IL 61801, USA.}
\email{jathreya@illinois.edu} 
    \thanks{J.S.A partially supported by NSF CAREER grant 1351853; NSF grant
   DMS 1069153; and NSF grants DMS 1107452, 1107263, 1107367  ``RNMS: GEometric structures And Representation varieties" (the GEAR Network)."}

\author[Chaubey]{Sneha Chaubey}
\address{Department of Mathematics, University of Illinois, 1409 West Green Street, Urbana, IL 61801, USA.}
\email{chaubey2@illinois.edu}

\author[Malik]{Amita Malik}
\address{Department of Mathematics, University of Illinois, 1409 West Green Street, Urbana, IL 61801, USA.}
\email{amalik10@illinois.edu}
 
 \author[Zaharescu]{Alexandru Zaharescu}
\address{
Simion Stoilow Institute of Mathematics of the Romanian Academy, P.O. Box 1-764, RO-014700 Bucharest, Romania 
\textnormal{and}
Department of Mathematics, University of Illinois, 1409 West Green Street, Urbana, IL 61801, USA.}
\email{zaharesc@illinois.edu}

\begin{abstract} 
The Farey sequence is a natural exhaustion of the set of rational numbers between 0 and 1 by finite lists. Ford Circles are a natural family of mutually tangent circles associated to Farey fractions: they are an important object of study in the geometry of numbers and hyperbolic geometry. We define two sequences of polygons associated to these objects, the Euclidean and hyperbolic \emph{Farey-Ford polygons}. We study the asymptotic behavior of these polygons by exploring various geometric properties such as (but not limited to) areas, length and slopes of sides, and angles between sides.
\iffalse
We compute the distributions and moments of certain statistics of geometric quantities associated to Ford circles. Our methods to compute distributions use the equidistribution of periodic orbits of the BCZ map, while the methods to compute moments are based on analytic number theory.
\fi

 \end{abstract}

\maketitle

\section{Introduction and Main Results} \emph{Ford circles}  were introduced by Lester R. Ford \cite{Ford} in 1938 and since then have appeared in several different areas of mathematics. Their connection with \emph{Farey fractions} is well known. Exploiting this connection between the two quantities we form the \emph{Farey-Ford polygons} associated to them. We study some statistics associated to both the Euclidean and hyperbolic geometry of Farey-Ford polygons:  in particular distances between vertices, angles, slopes of edges and areas.
The analysis of these quantities is an interesting study in itself as distances, angles
and areas are among the most common notions traditionally studied
in geometry. 
We use dynamical methods to compute limiting distributions, and analytic number theory to compute asymptotic behavior of moments for the \emph{hyperbolic} and \emph{Euclidean} distance between the vertices.

\iffalse
$P(Q)$ by connecting $(0,1/2)$ to $(1, 1/2)$ by a geodesic, then form the "bottom" of the polygon by connecting $\left(\dfrac{p_i}{q_i}, \dfrac{1}{2q_i^2}\right)$ to $\left(\dfrac{p_{i+1}}{q_{i+1}}, \dfrac{1}{2q_{i+1}^2}\right)$. It is Euclidean or hyperbolic depending on what type of geodesic we use.
distances, angles
and areas are among the most common notions traditionally studied
in geometry
\emph{Ford circles} \cite{Ford} have been studied from many different perspectives, primarily number theory and hyperbolic geometry. We study some statistics associated to both the Euclidean and hyperbolic geometry of Ford circles:  in particular distances between centers and angles. We use dynamical methods to compute limiting distributions, and analytic number theory to compute asymptotic behavior of moments for the \emph{hyperbolic} distance between centers.
\fi

Below, we define Farey-Ford polygons (\S\ref{sec:ford}), describe the statistics that we study (\S\ref{sec:geomstat}) and state our main theorems (\S\ref{sec:moments:results} and \S\ref{sec:dist:results}). In \S\ref{sec:moments} we prove our results on moments, in \S\ref{sec:dist} we prove our distribution results, and in \S\ref{sec:geom} we compute the tail behavior of the statistics we study.

\subsection{Farey-Ford polygons}\label{sec:ford} Recall the definition of \emph{Ford circles}: The Ford circle $$\mathcal C_{p/q}: =\left\{z \in \mathbb C: \left| z - \left(\frac p q  + i \frac{1}{2q^2}\right)\right| = \frac{1}{2q^2}\right\}$$ is the circle in the upper-half plane tangent to the point $\left(p/q, 0\right)$ with diameter $1/q^2$ (here and in what follows we assume that $p$ and $q$ are relatively prime). 

%The modular group $SL(2, \mathbb Z)$ acts transitively on the set of Ford circles via %Mobius transformations. 

\iffalse
The \emph{Ford circles} $\mathcal F$ are the collection of circles in the upper half-plane $\h$ tangent to the real axis, consisting of the circles $\mathcal C_{p/q}$ tangent at the point $\left(p/q, 0\right)$ with diameter $1/q^2$ (here and in what follows we will assume $p$ and $q$ are relatively prime). These circles can be viewed as the image of the set $\{(x,y) \in \h: y \geq 1\}$ under fractional linear transformations with integer coefficients, that is, the modular group $SL(2, \mathbb Z)$. 

\fi

\begin{figure}[h!]
\begin{subfigure}[b]{.67\textwidth}
\centering
\resizebox{\linewidth}{!}{
\begin{tikzpicture}[scale=5.30]

 \def \firstcircle {(0, 1/2) circle(1/2)}
  \def \newcircle {(1/6, 1/72) circle(1/72)}
  \def \firstttcircle{(1/5, 1/50) circle(1/50)}
  \def \firsttcircle {(1/4, 1/32) circle(1/32)}
    \def \secondcircle {(1/3, 1/18) circle(1/18)}
    \def \thirddcircle {(2/5, 1/50) circle(1/50)}
      \def \thirdcircle  {(1/2, 1/8) circle(1/8)}
      \def \forthhhcircle   {(3/5, 1/50) circle(1/50)}
       \def \forthcircle   {(2/3, 1/18) circle(1/18)}
       \def \forthhcircle   {(3/4, 1/32) circle(1/32)}
       \def \fifthhcircle   {(4/5, 1/50) circle(1/50)}
       \def \fifthcircle   {(1, 1/2) circle(1/2)}
       \def \lastcircle {(5/6, 1/72) circle(1/72)}
       
       \def \vertex {(0,1/2) circle(1/50)}
       \def \newvertex {(1/5,1/50) circle(1/130)}
       \def \vert {(1/4,1/32) circle(1/100)}
        \def \vertexx {(1/3,1/18) circle(1/80)}
        \def \vertexxx {(1/2,1/8) circle(1/60)}
        \def \verttt {(2/3,1/18) circle(1/80)}
         \def \vertt {(3/4,1/32) circle(1/100)}
          \def \newvertexxxx {(4/5,1/50) circle(1/150)}
        \def \vertexxxx {(1,1/2) circle(1/50)}
        \def \lastvert {(2/5,1/50) circle(1/150)}
        \def \secondlastvert {(3/5,1/50) circle(1/150)}
        
        \def \firstcor {(0, 0) circle(1/150)}
        \def \newcor {(1/6, 0) circle(1/190)}
        \def \secondcor {(1/5,0) circle(1/180)}
        \def \thirdcor {(1/4,0) circle(1/150)}
        \def \forthcor {(1/3,0) circle(1/150)}
        \def \fifthcor {(2/5,0) circle(1/180)}
        \def \sixthcor {(1/2,0) circle(1/150)}
        \def \seventhcor {(3/5,0) circle(1/180)}
        \def \eighthcor {(2/3,0) circle(1/150)}
        \def \ninthcor {(3/4,0) circle(1/150)}
        \def \tenthcor {(4/5,0) circle(1/180)}
        \def \eleventhcor {(1,0) circle(1/150)}
        \def \lastcor {(5/6,0) circle(1/190)}
        
        %\def \vert {(1,1/2) circle(1/50)}
%\node at (0,1/2) [stuff_fill] {};
%\coordinate (1) at (0,1/2);
%\coordinate (2) at (1/3,1/18);
 %\coordinate (3) at (1/2,1/8);
% \coordinate (4) at (1,1/2);
%\foreach \x in {1,2,...,4}{
   %  \node[mynode] at (\x) {};
 %}

\filldraw[lightpink] \firstcircle;
    \draw \firstcircle ;
    \filldraw[lightpink] \newcircle;
    \draw \newcircle ;
    %node[xshift=-0.5cm, yshift=0.1cm]{\Huge 1};
    \filldraw[lightpink] \firstttcircle;
    \draw \firstttcircle ;
     \filldraw[lightpink] \firsttcircle;
    \draw \firsttcircle ;
    % node[text=black,align=right] {\tiny 32};
     \filldraw[lightpink] \secondcircle;
     \draw \secondcircle ;
     %node[text=black,align=center] {\tiny 18};
      \filldraw[lightpink] \thirddcircle;
      \draw \thirddcircle ;
      \filldraw[lightpink] \thirdcircle;
      \draw \thirdcircle ;
      \filldraw[lightpink] \forthhhcircle;
      \draw \forthhhcircle ;
      %node[text=black,align=center] { \tiny 25};
      \filldraw[lightpink] \forthcircle;
      \draw \forthcircle ;
      %node[text=black,align=center] {\tiny 18};
       \filldraw[lightpink] \forthhcircle;
      \draw \forthhcircle ;
      %node[text=black,align=center] { \tiny 32};
       \filldraw[lightpink] \fifthhcircle;
      \draw \fifthhcircle ;
  \filldraw[lightpink] \fifthcircle;
      \draw \fifthcircle ;
       \filldraw[lightpink] \lastcircle;
      \draw \lastcircle ;
      %node[xshift=0.5cm, yshift=0.1cm] {\Huge 1};
      \filldraw[red] \vertex;
      \filldraw[red] \newvertex;
      \filldraw[red] \vert;
     \filldraw[red] \vertexx;
     \filldraw[red] \vertexxx;
     \filldraw[red] \verttt;
     \filldraw[red] \vertt;
     \filldraw[red] \vertexxxx;
     \filldraw[red] \newvertexxxx;
       \filldraw[red] \secondlastvert;
       \filldraw[red] \lastvert;

\filldraw[black]\firstcor;
\filldraw[black]\newcor;
\filldraw[black]\secondcor;
\filldraw[black]\thirdcor;
\filldraw[black]\forthcor;
\filldraw[black]\fifthcor;
\filldraw[black]\sixthcor;
\filldraw[black]\seventhcor;
\filldraw[black]\eighthcor;
\filldraw[black]\ninthcor;
\filldraw[black]\tenthcor;
\filldraw[black]\eleventhcor;
\filldraw[black]\lastcor;

 \draw[ thick] (0, 0) -- (1, 0);

\node[align=left, below] at (0, 0){$\frac{0}{1}$};
\node[align=left, below] at (1/6, 0){$\frac{1}{6}$};
\node[align=left, below] at (1/5, 0){$\frac{1}{5}$};
\node[align=left, below] at (1/4, 0){$\frac{1}{4}$};
\node[align=left, below] at (1/3, 0){$\frac{1}{3}$};
\node[align=left, below] at (2/5, 0){$\frac{2}{5}$};
\node[align=left, below] at (1/2, 0){$\frac{1}{2}$};
\node[align=left, below] at (3/5, 0){$\frac{3}{5}$};
\node[align=left, below] at (2/3, 0){$\frac{2}{3}$};
\node[align=left, below] at (3/4, 0){$\frac{3}{4}$};
\node[align=left, below] at (4/5, 0){$\frac{4}{5}$};
\node[align=left, below] at (5/6, 0){$\frac{5}{6}$};
\node[align=left, below] at (1, 0){$\frac{1}{1}$};

\draw[red, thick] (0,1/2) -- (1/5,1/50)--(1/4,1/32)--(1/3, 1/18)--(2/5,1/50)--(1/2, 1/8)--(3/5,1/50)--(2/3,1/18)--(3/4,1/32)--(4/5,1/50)--(1, 1/2)--(0,1/2);
\draw[blue, thick, dashed] (-1/8,61/3600) -- (9/8,61/3600);
% \draw[red, thick] (0,1/2) -- (1/3, 1/18)--(1/2, 1/8)--(1, 1/2)--(0,1/2);
 % \draw[blue, thick, dashed ] (0,1/2) -- (1/2, 1/8)--(2/3, 1/18)--(1, 1/2)--(0,1/2);
  %\draw[black, thick,  dashed ] (0,1/2) --(1/4, 1/32) --(1/3, 1/18)--(1, 1/2)--(0,1/2);
  %\draw[olive, thick, dashed ] (0,1/2) -- (2/3, 1/18)--(3/4, 1/32)--(1, 1/2)--(0,1/2);
 % {(\textit{h\,k\,l})};
\node at (0.5,-0.25) {$P_{[0,1]}(5)$ };
\end{tikzpicture}
}
%\caption{$P_{[0,1]}(3,2)$ }
%\label{fig:1}
%\end{center}
%\iffalse
\end{subfigure}\\

\caption{Euclidean Farey-Ford Polygons}
\label{fgg}
\end{figure}
%\bigskip

\noindent Given $\delta >0$ and $I=[\alpha, \beta] \subseteq [0,1]$, we consider the subset $$\mathcal{F}_{I, \delta}: = \left\{ \mathcal C_{p/q}: p/q \in I, \frac{1}{2q^2} \geq \delta\right\}$$ of Ford circles with centers above the horizontal line $y=\delta$ and with tangency points in $I$. Writing $$Q = \left\lfloor {(2\delta)}^{-1/2} \right\rfloor,$$ we have that the set of tangency points to the $x$-axis is $$\mathcal F(Q) \cap I,$$ where 
\[\mathcal F(Q):= \{ p/q: 0 \leq p\le q\le Q, (p,q)=1 \}
\]
 denotes the Farey fractions of order $Q$. We write $$\mathcal F(Q) \cap I : = \{ p_1/q_1 < p_2/q_2 \ldots <p_{N_I(Q)}/q_{N_I(Q)}\},$$ where $N_I(Q)$ is the cardinality of the above set (which grows quadratically in $Q$, so linearly in $\delta^{-1}$). We denote the circle based at $p_j/q_j$ by $C_j$ for $1\le j \le N_I(Q)$, and define the \emph{Farey-Ford polygon} $P_I(Q)$ of order $Q$ by 
 
 \begin{itemize}
 
 \item connecting $(0,1/2)$ to $(1,1/2)$ by a geodesic
 
 \medskip
 
 \item forming the ``bottom'' of the polygon by connecting the points $(p_j/q_j, 1/2{q_j}^2)$ and $(p_{j+1}/q_{j+1}, 1/2{q_{j+1}}^2)$ for $1\le j \le N_I(Q)-1$ by a geodesic. 
 
 \end{itemize}
 
 We call these polygons Euclidean (respectively hyperbolic) Farey-Ford polygons if the geodesics connecting the vertices are Euclidean (resp. hyperbolic). Figure \ref{fgg} shows the Euclidean Farey-Ford polygon $P_{[0,1]}(5)$.
\begin{rem}
For $ n \in \mathbb{N} $, consider the sequence $\langle f_n\rangle $ of functions $ f_n(x):[0,1]\rightarrow [0,1] $ defined as
\[f_n(x):=
\left\{
	\begin{array}{ll}
		\dfrac{1}{2q^2} & \mbox{if }  x=\frac{p}{q}, \ (p,q)=1, 1\le q\le n,\\
		0  & \mbox{otherwise} .
	\end{array}
\right.
\]
\iffalse
 \[f_n(x)\defeq\left\{\frac{1}{2q^2} \text{ for } x=\frac{p}{q}, q\le n \text{ and } 0 \text{ otherwise }\right\}.  \]
\fi

\iffalse
\begin{figure}[t]
\includegraphics[width=6.5cm]{Thomae.pdf}
\centering

\caption{Thomae's function on $(0,1]$ \label{fig:thom}}
%\label{fig:thom}
\end{figure}
\fi

 In other words, $f_n(x) $ is non-zero only when $x$ is a Farey fraction of order $ n $, in which case $ f_n(x) $ denotes the Euclidean distance from the real axis to the vertices of Farey-Ford polygons of order $ n $. As we let $ n $ approach infinity, the sequence $ \langle f_n(x) \rangle $ converges and its limit is given by $\tfrac12(f(x))^2$, where $f(x)$ is Thomae's function defined on $[0,1]$ as follows: 
 \[f(x):=
 \left\{
 \begin{array}{ll}
 \frac{1}{q}  & \text{ if } x =\frac{p}{q}, \ (p,q)=1, q> 0\\ 
 0 & \text{ otherwise},
 \end{array}
 \right.
 \] 
 which is continuous at every irrational point in $ [0,1] $ and discontinuous at all rational points in $ [0,1].$
\end{rem}

\iffalse
\begin{figure}[h!]
\begin{subfigure}[t]{.43\textwidth}
\includegraphics[scale=0.5]{Thomae.pdf}
\caption{Thomae's function on $(0,1}$
\label{fig:}
\end{subfigure}
\end{figure}
\fi

\subsection{Geometric Statistics}\label{sec:geomstat} Given a Farey-Ford polygon $P_I(Q)$, for $1\le j\le N_I(Q)-1$, let $$ x_j  \defeq \frac{q_j}{Q}, \ y_j \defeq \frac{q_{j+1}}{Q}.$$  For $1\le j\le N_I(Q)-1$, we have $0 < x_j, y_j \le 1$. And, \[x_j+x_{j+1}=\frac{q_j+q_{j+1}}{Q}>1,\] since 
between any two Farey fractions $\frac{p_j}{q_j}$ and $\frac{p_{j+1}}{q_{j+1}},$ of order Q,
there is a Farey fraction given by $\frac{p_j+p_{j+1}}{q_j+q_{j+1}}$ which is not in $\mathcal{F}(Q) \cap I$ as $\frac{p_j}{q_j}$ and $\frac{p_{j+1}}{q_{j+1}}$ are neighbours in $\mathcal{F}(Q) \cap I$.

%The latter inequality being true since $q_j$ and $q_{j+1}$ are denominators of consecutive Farey fractions of order $Q$.

\iffalse
Given consecutive circles $C_j$ and $C_{j+1}$ in $\mathcal{F}_{I, \delta}$, let $$ x_j  \defeq \frac{q_j}{Q}, \ y_j \defeq \frac{q_{j+1}}{Q}.$$  For $1\le j\le N_I(Q)$, we have $0 < x_j, y_j \le 1$ and $ x_j + y_j > 1.$ The latter inequality being true since $q_j$ and $q_{j+1}$ are denominators of consecutive Farey fractions.
\fi

Let $$\Omega \defeq \{ (x,y) \in (0, 1]^2: x+y >1\},$$ so we have $(x_j, y_j) \in \Omega$ for $1\le j \le N_I(Q)-1$. A \emph{geometric statistic} $F$ is a measurable function $F: \Omega \rightarrow \R$. Given a geometric statistic F, we define the \emph{limiting distribution for Ford circles} $G_{F}(t)$ of F (if it exists) to be the limiting proportion of Ford circles for which $F$ exceeds $t$, that is, $$G_F(t) \defeq \lim_{\delta \rightarrow 0} \frac{ \#\{1 \le j \le N_I(Q): F(x_j, y_j) \geq t\}}{N_I(Q)}.$$ Similarly, we define the \emph{moment} of $F$ by 
\[ M_{F, I, \delta}(\mathcal F) : = \frac{1}{|I|}\sum_{j=1}^{N_I(Q)-1}| F(x_j, y_j)|.  \numberthis\label{Geomstat} \]
We describe how various natural geometric properties of Farey-Ford polygons can be studied using this framework.

\subsubsection{Euclidean distance} \label{euc} 
Given consecutive circles $C_j$ and $C_{j+1}$ in $\mathcal{F}_{I, \delta}$, we consider the \emph{Euclidean} distance $d_j$ between their centers $O_j$ and $O_{j+1}$.  Note that since consecutive circles are mutually tangent, the Euclidean geodesic connecting the centers of the circles passes through the point of tangency, and has length equal to the sum of the radii, that is, $$d_j  = \frac{1}{2q_j^2} + \frac{1}{2q_{j+1}^2}.$$ Therefore, we have $$Q^2 d_j = \frac{1}{2x_j^2} + \frac{1}{2y_j^2}.$$ Thus, by letting $F(x_j,y_j) = \frac{1}{2{x_j}^2} + \frac{1}{2{y_j}^2}$, we have expressed the Euclidean distance (appropriately normalized) as a geometric statistic.

In ~\cite{CMZ}, the latter three authors computed averages of first and higher moments of the Euclidean distance $d_j$, in essence, they computed explicit formulas for 
\[ \frac{1}{N_I(Q)}{\int_X}^{2X}\sum_{j=1}^{N_I(Q)-1}d_j^k  ~d(2 \delta)^{-1/2} \text{   for } k\ge 1,\]
where $X$ was a large real number.

\subsubsection{Slope} Let $t_j$ denote the Euclidean slope of the Euclidean geodesic joining the centers $O_j$ and $O_{j+1}$ of two consecutive circles $C_j$ and $C_{j+1}.$ It is equal to
\[t_j= \frac{1}{2}\left( \frac{q_i}{q_{i+1}}-\frac{q_{i+1}}{q_{i}}\right).\] We associate the geometric statistic 
$\displaystyle{F(x_j,y_j)=\frac{1}{2} \left( \frac{x_j}{ y_j} - \frac{y_j}{x_j} \right)}$ to it.

\subsubsection{Euclidean angles} Let $\theta_j$ be the angle between the Euclidean geodesic joining $O_j$ with $(p_j/q_j,0)$ and the Euclidean geodesic joining $O_j$ with the point of tangency of the circles $C_j$ and $C_{j+1}$. In order to compute $\theta_j$, we complete the right angled triangle with vertices $O_j$, $O_{j+1}$ and the point lying on the Euclidean geodesic joining $O_j$ and $(p_j/q_j,0).$ Then,

\[\tan\theta_j
=
\left\{
	\begin{array}{ll}
		\frac{2q_j q_{j+1}}{q^2_{j+1}-q_j^2}  & \mbox{if }  q_{j+1}>q_j,\\
		\frac{2q_j q_{j+1}}{q_j^2-q^2_{j+1}} & \mbox{if } q_{j+1}< q_j.
	\end{array}
\right.
\]
Thus, the geometric statistic associated to $\tan\theta_j$ is given by 
\[F(x_j,y_j)
=
\left\{
	\begin{array}{ll}
		\frac{2x_j y_j}{y_j^2-x_j^2} & \mbox{if }  y_j>x_j,\\
	 \frac{2x_jy_j}{x_j^2-y_j^2} & \mbox{if } y_j<x_j.
	\end{array}
\right. \numberthis\label{EucAng}
\]

\iffalse
\[\theta_j
=
\left\{
	\begin{array}{ll}
		\arctan{\frac{2q_j q_{j+1}}{q^2_{j+1}-q_j^2}}  & \mbox{if }  q_{j+1}>q_j,\\
		\pi-\arctan{\frac{2q_j q_{j+1}}{q_j^2-q^2_{j+1}}}  & \mbox{if } q_{j+1}\le q_j.
	\end{array}
\right.
\]
Thus, the geometric statistic associated to $\theta_j$ is given by 
\[F(x_j,y_j)
=
\left\{
	\begin{array}{ll}
		\arctan{\frac{2x_j y_j}{y_j^2-x_j^2}} & \mbox{if }  y_j>x_j,\\
	 \pi-\arctan{\frac{2x_jy_j}{x_j^2-y_j^2}} & \mbox{if } y_j\le x_j.
	\end{array}
\right. \numberthis\label{EucAng}
\]
\fi
\subsubsection{Euclidean area} Let $A_j$ denote the area of the trapezium whose sides are formed by the Euclidean geodesics joining the points $\left(\dfrac{p_j}{q_j}, 0\right)$ and $O_j$;  $\left(\dfrac{p_{j+1}}{q_{j+1}}, 0\right)$ and $O_{j+1}$; $O_{j}$ and $O_{j+1}$; $\left(\dfrac{p_j}{q_j}, 0\right)$ and $\left(\dfrac{p_{j+1}}{q_{j+1}}, 0\right)$. This gives $\displaystyle{A_j=\frac{1}{4q_j^3q_{j+1}}+\frac{1}{4q_jq_{j+1}^3}}.$ The geometric statistic (normalized) in this case is
\[F(x_j,y_j)= \frac{1}{4x_j^3y_{j}}+\frac{1}{4x_jy_{j}^3}.\]

\subsubsection{Hyperbolic distance} Given consecutive circles in $\mathcal{F}_{I, \delta}$ we let $\rho_j$ denote the \emph{hyperbolic} distance between their centers $O_j$ and $O_{j+1}$, where we use the standard hyperbolic metric on the upper half plane $\h$ defined as $$ds : = \frac{\sqrt{dx^2 + dy^2}}{y}.$$ A direct computation shows that
\[\sinh \frac{\rho_j}{2}= \frac{q_j}{2q_{j+1}}+\frac{q_{j+1}}{2q_j} = \frac{x_j}{2y_j} + \frac{y_j}{2x_j} = \frac{1}{2} \left( \frac {x_j} {y_j} + \frac {y_j} {x_j} \right)\] Letting $\displaystyle{F(x_j,y_j)=\frac{1}{2} \left( \frac{x_j}{ y_j} +\frac{y_j}{x_j} \right)}$, we have expressed ($\sinh$ of half of) the hyperbolic distance as a geometric statistic.

\subsubsection{Hyperbolic angles} Let $\alpha_j$ denote the angle between the hyperbolic geodesic parallel to the $y$-axis, passing through the center $O_j$, and the hyperbolic geodesic joining the centers $O_j$ and $O_{j+1}$. This angle can be calculated by first computing the center of the hyperbolic geodesic joining $O_j$ and $O_{j+1}$. Note that the center of this geodesic lies on the x-axis. Let us denote it by $(c_j,0)$. Now, $\alpha_j$ is also the acute angle formed between the line joining $(c_j,0)$ and $O_j$, and the x-axis. Thus $\tan\alpha_j$ is given by
\[\tan\alpha_j=\frac{4q_j^5}{q_{j+1}^3} - 4q_jq_{j+1}+\frac{16q_j^3}{q_{j+1}}.\]
Consequently the geometric statistic (normalized) associated to it is \[F(x_j,y_j)=\frac{4x_j^5}{y_{j}^3} - 4x_jy_{j}+\frac{16x_j^3}{y_{j}}.\]

\subsection{Moments}\label{sec:moments:results}The following results concern the growth of the moments $M_{F, I, \delta}$ for the geometric statistics expressed in terms of Euclidean distances, hyperbolic distances and angles. 
\begin{thm}\label{theorem:eucl:moment}
For any real number $\delta>0,$ and any interval $ I=[\alpha, \beta]\subset [0,1]$, with $\alpha,\beta\in\mathbb{Q}$ and $F(x,y)=\frac{1}{2x^2}+\frac{1}{2y^2}$,
\begin{align*}
M_{F,I,\delta}(\mathcal{F})=\frac{6}{\pi^2}Q^2\log Q+D_{I}Q^2+\BigO{Q\log Q},
\end{align*}
where $M_{F,I,\delta}(\mathcal{F})$ is defined as in \eqref{Geomstat}, $Q$ is the integer part of $\frac{1}{\sqrt{2\delta}}$ and $D_I$ is a constant depending only on the interval $I$. Here the implied constant in the Big O-term depends on the interval I.
\end{thm}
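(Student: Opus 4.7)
The plan is to rewrite the moment as a weighted sum over denominators of Farey fractions in $I$, count these denominators via Möbius inversion, and then apply a standard asymptotic for $\sum_{q\le Q}\varphi(q)/q^{2}$.

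First I would unravel the definitions. Since $x_j=q_j/Q$ and $y_j=q_{j+1}/Q$, the geometric statistic $F(x_j,y_j)=\tfrac{1}{2x_j^{2}}+\tfrac{1}{2y_j^{2}}$ equals $\tfrac{Q^{2}}{2q_j^{2}}+\tfrac{Q^{2}}{2q_{j+1}^{2}}$. Summing over $1\le j\le N_I(Q)-1$ each interior $1/q_j^{2}$ appears twice while $1/q_1^{2}$ and $1/q_{N_I(Q)}^{2}$ appear once, giving
\[
M_{F,I,\delta}(\mathcal F)=\frac{Q^{2}}{|I|}\sum_{p/q\in\mathcal F(Q)\cap I}\frac{1}{q^{2}}\;-\;\frac{Q^{2}}{2|I|}\left(\frac{1}{q_1^{2}}+\frac{1}{q_{N_I(Q)}^{2}}\right).
\]
Because $\alpha,\beta\in\mathbb Q$, for $Q$ large enough the extremal fractions are exactly $\alpha$ and $\beta$, so $q_1$ and $q_{N_I(Q)}$ are fixed integers depending only on $I$; the boundary contribution is therefore a constant multiple of $Q^{2}$, absorbable into the $D_I Q^{2}$ term.

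Next I would evaluate $S(Q):=\sum_{p/q\in\mathcal F(Q)\cap I}q^{-2}$. Grouping by the denominator,
\[
S(Q)=\sum_{q=1}^{Q}\frac{1}{q^{2}}\,\#\bigl\{p:\ \alpha q\le p\le\beta q,\ (p,q)=1\bigr\}.
\]
By Möbius inversion the inner count equals $\sum_{d\mid q}\mu(d)\,\#\{k:\alpha q/d\le k\le \beta q/d\}=|I|\,\varphi(q)+R_I(q)$, with $R_I(q)\ll\tau(q)$ uniformly. Splitting $S(Q)$ accordingly gives
\[
S(Q)=|I|\sum_{q=1}^{Q}\frac{\varphi(q)}{q^{2}}+\sum_{q=1}^{Q}\frac{R_I(q)}{q^{2}}.
\]
Since $\sum_{q}\tau(q)/q^{2}$ converges, the second piece equals $C_I+O(Q^{-1+\varepsilon})$ for a constant $C_I$ depending on $I$. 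For the first piece I would invoke the classical asymptotic
\[
\sum_{q=1}^{Q}\frac{\varphi(q)}{q^{2}}=\frac{6}{\pi^{2}}\log Q + A + O\!\left(\frac{\log Q}{Q}\right),
\]
where $A$ is an explicit absolute constant (obtainable by writing $\varphi(q)/q=\sum_{d\mid q}\mu(d)/d$, switching the order of summation, and using partial summation against $\sum_{n\le Q/d}1/n$).

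Combining these pieces yields $S(Q)=\tfrac{6|I|}{\pi^{2}}\log Q + A|I| + C_I + O(Q^{-1}\log Q)$, so
\[
M_{F,I,\delta}(\mathcal F)=\frac{6}{\pi^{2}}\,Q^{2}\log Q + \left(A+\frac{C_I}{|I|}\right)Q^{2} + O(Q\log Q),
\]
after reabsorbing the boundary term and multiplying by $Q^{2}/|I|$; this identifies $D_I$ and matches the claimed error $O_I(Q\log Q)$.

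The main technical obstacle is the bookkeeping of error terms: one must verify that the $O(\tau(q))$ error from the Möbius step really does give a convergent tail (so the constant $C_I$ is well defined) and that the tail $\sum_{q>Q}\tau(q)/q^{2}=O(Q^{-1}\log Q)$ is consistent with the stated error. The other mildly delicate point is handling the dependence on $I$ uniformly: the implicit constants in $R_I(q)\ll \tau(q)$ are genuinely $O_I(1)$ because each Möbius summand contributes a $\pm 1$ lattice-point error whose count is bounded by $\tau(q)$ independent of $\alpha,\beta$, which makes the final dependence on $I$ lie only in the constant $D_I$.
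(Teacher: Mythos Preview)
Your proposal is correct and follows essentially the same route as the paper: rewrite the moment as $\tfrac{Q^{2}}{|I|}\sum_{p/q\in\mathcal F(Q)\cap I}q^{-2}$ plus a boundary piece, open the coprimality condition via M\"obius, separate off $|I|\sum_{q\le Q}\varphi(q)/q^{2}$, and invoke the standard asymptotic $\sum_{q\le Q}\varphi(q)/q^{2}=\tfrac{6}{\pi^{2}}\log Q+A+O(Q^{-1}\log Q)$. The only cosmetic difference is in the remainder term: the paper changes variables $q=md$ to write it as $\sum_{d\le Q}\mu(d)d^{-2}\sum_{m\le Q/d}(\{\beta m\}-\{\alpha m\})m^{-2}$ and extracts the constant directly, whereas you bound it by $\sum_q\tau(q)/q^{2}$; both give the same $O_I(Q^{-1}\log Q)$ tail.
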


\begin{thm}\label{theorem:hyp:moment} 
For any real number $\delta>0,$ any interval $ I=[\alpha, \beta]\subset [0,1]$ and $F(x,y)=\frac{1}{2}\left(\frac{x}{y}+\frac{y}{x}\right)$,
\begin{align*}
M_{F,I,\delta}(\mathcal{F})= \frac{9}{2\pi^2}Q^2 +\BigOIe{Q^{7/4+\epsilon}},
\end{align*}
where $M_{F,I,\delta}(\mathcal{F})$ is defined as in \eqref{Geomstat}, $C_I$ is a constant depending only on the interval $I$, $\epsilon$ is any positive real number, and $Q$ is the integer part of $\frac{1}{\sqrt{2\delta}}$. The implied constant in the Big O-term depends on the interval I and $\epsilon$.
\end{thm}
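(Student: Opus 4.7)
The plan is to re-express the moment as a sum over coprime denominator pairs, extract the leading asymptotic via a Möbius-based evaluation of an elementary closed-form sum, and bound the error coming from the restriction to $I$ using incomplete Kloosterman sums.

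\textbf{Reparametrization.} Consecutive pairs $(p_j/q_j,\,p_{j+1}/q_{j+1})$ in $\mathcal F(Q)\cap I$ are in bijection with triples $(a,b,p)$ where $(a,b)=(q_j,q_{j+1})$ satisfies $1\le a,b\le Q$, $\gcd(a,b)=1$, $a+b>Q$, and $p\in\{0,1,\dots,a-1\}$ is determined by $pb\equiv -1\pmod a$ (so that $p'=(1+pb)/a\in\Z$); the pair lies in $I$ iff $p/a,p'/b\in[\alpha,\beta]$. Since $p'/b-p/a=1/(ab)\le 1/Q$, the second condition only removes a sliver of width $O(1/(ab))$ near $\beta$, contributing $O_I(Q\log Q)$ in total. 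Using $F(x_j,y_j)=(a^2+b^2)/(2ab)$,
\[
|I|\,M_{F,I,\delta}(\mathcal F)=\sum_{\substack{1\le a,b\le Q\\\gcd(a,b)=1,\;a+b>Q}}\frac{a^2+b^2}{2ab}\,\mathbf{1}_{[\alpha,\beta]}\!\bigl(p(a,b)/a\bigr)+O_I(Q\log Q).
\]
For fixed $a$, $b$ ranges over the $\varphi(a)$ reduced residues in the interval $(Q-a,Q]$ (a complete residue system), and the map $b\mapsto p$ is a bijection of $(\Z/a\Z)^{\times}$.

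\textbf{Main term.} Expand $\mathbf{1}_{[\alpha,\beta]}(p/a)$ as a Fourier series on $\Z/a\Z$; the $k=0$ coefficient is $|I|+O(1/a)$ and contributes $|I|\,T(Q)$, where $T(Q)$ denotes the unrestricted sum. By the $(a,b)\leftrightarrow(b,a)$ symmetry, $T(Q)=\sum_{(a,b)}a/b$ over the same coprime set. Stripping coprimality by Möbius inversion and substituting $a\to da,\;b\to db$ reduces $T(Q)$ to $\sum_{d\le Q}\mu(d)\,U(\lfloor Q/d\rfloor)$, where
\[
U(N):=\sum_{\substack{1\le a,b\le N\\a+b>N}}\frac{a}{b}=\frac{3N^{2}+N}{4},
\]
the closed form being obtained by splitting the inner $a$-sum at $N-b$ and collecting terms (the harmonic contributions cancel). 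Hence
\[
T(Q)=\sum_{d=1}^{Q}\mu(d)\,\frac{3\lfloor Q/d\rfloor^{2}+\lfloor Q/d\rfloor}{4}=\frac{9}{2\pi^{2}}\,Q^{2}+O(Q\log Q),
\]
and after dividing by $|I|$ the main term of $M_{F,I,\delta}$ is $\frac{9}{2\pi^{2}}Q^{2}$.

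\textbf{Error and main obstacle.} The non-constant Fourier frequencies contribute
\[
E(Q):=\sum_{a=1}^{Q}\sum_{0<|k|\le a/2}c_{k}(a,I)\sum_{\substack{b\in(Q-a,Q]\\\gcd(a,b)=1}}\frac{a^{2}+b^{2}}{2ab}\,e\!\left(-\frac{k\bar b}{a}\right),
\]
where $|c_{k}(a,I)|\ll\min(|I|,1/|k|)$ and $\bar b$ denotes the inverse of $b$ modulo $a$. The innermost sum is an incomplete Kloosterman sum of length $a$ against the smooth rational weight $h(b)=(a^{2}+b^{2})/(2ab)$; by Abel summation it is controlled by $h(Q)\sim Q/a$ (when $a\ll Q$) times a P\'olya--Vinogradov/Kloosterman completion bound. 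The decisive analytic step is this estimate: inserting the classical Kloosterman bound $O(a^{3/4+\epsilon})$ for the completed sum gives $O(Q\,a^{-1/4+\epsilon})$ per value of $a$ (absorbing the logarithmic loss from the $k$-sum into $\epsilon$), so
\[
E(Q)\ll_{\epsilon}\sum_{a=1}^{Q}Q\,a^{-1/4+\epsilon}\ll_{\epsilon}Q^{7/4+\epsilon},
\]
which matches the stated error. The smaller contribution from $a\sim Q$, the boundary mismatch between the conditions $p/a\in I$ and $p'/b\in I$, and the tails of the Möbius inversion are all absorbed into the $O_{I,\epsilon}$ constant.
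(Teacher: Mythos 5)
Your argument is essentially correct, but it follows a genuinely different route from the paper. The paper conditions on the larger denominator $q$ and encodes the restriction to $I$ through the BCZ counting function $A_q(t)=\sum_{q'\le t}a_q(q')=\frac{\phi(q)}{q}(t-1)|I|+O_{I,\epsilon}\left(q^{1/2+\epsilon}\right)$ (Weil-type Kloosterman input, quoted from \cite{BCZ}); it then evaluates the inner sum over the neighbour $q'$ by Abel summation against the weight $\frac{q}{q'}+\frac{q'}{q}$, sums over $q$ with the lemma $\sum_{a<k\le b}\frac{\phi(k)}{k}f(k)=\frac{6}{\pi^2}\int_a^b f+O(\cdots)$, and splits the range into three pieces with a cutoff $L$; balancing the errors at $L=Q^{3/4}$ is what produces the exponent $7/4$. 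You instead symmetrize in $(a,b)$, obtain the main term from the exact elementary identity $U(N)=\frac{3N^2+N}{4}$ together with M\"obius inversion (which cleanly gives $\frac{9}{2\pi^2}Q^2+O(Q\log Q)$ with no splitting parameter), and you impose the condition $p/a\in I$ directly by finite Fourier expansion mod $a$ followed by completion to Kloosterman sums; your $7/4$ comes from inserting Kloosterman's classical $a^{3/4+\epsilon}$ bound, and in fact using Weil's $a^{1/2+\epsilon}(k,m,a)^{1/2}$ bound in your scheme would improve the error to $O_{I,\epsilon}\left(Q^{3/2+\epsilon}\right)$, strictly better than the stated (and the paper's) bound. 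Both proofs ultimately rest on the same arithmetic input — cancellation in Kloosterman sums forcing $\bar b \bmod a$ to equidistribute in $I$ — but your decomposition isolates it only in the error term, whereas the paper's $A_q(t)$ carries it through the whole computation.

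Two small points you should tighten, neither of which affects the conclusion. First, in the Abel-summation step the incomplete sum is controlled by the sup \emph{plus the total variation} of $h(b)=\frac{a}{2b}+\frac{b}{2a}$ on $(Q-a,Q]$, not by $h(Q)$ alone; for $a\le Q/2$ both are $O(Q/a)$, but for $a$ close to $Q$ they are of size $\frac{a}{Q-a+1}$, and the resulting contribution $\sum_{Q/2<a\le Q}\frac{Q}{Q-a+1}a^{3/4+\epsilon}\ll Q^{7/4+\epsilon}\log Q$ is comparable to, not smaller than, your main error term — it is absorbed, but say so explicitly. Second, the discrepancy between ``$p/a\in I$'' and ``both endpoints in $I$'' is even smaller than your $O_I(Q\log Q)$: since consecutive Farey intervals are disjoint, at most two pairs (one straddling each endpoint of $I$) are affected, each contributing $O(Q)$.
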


\begin{thm}\label{theorem:angle:moment} 
For any real number $\delta>0,$ any interval $ I=[\alpha, \beta]\subset [0,1]$ and $F(x,y)$ as in \eqref{EucAng}, we have,
\begin{align*}
M_{F,I,\delta}(\mathcal{F})= \frac{12}{\pi}Q^2 +\BigO{Q\log Q},
\end{align*}
where $M_{F,I,\delta}(\mathcal{F})$ is defined as in \eqref{Geomstat}, and $Q$ is the integer part of $\frac{1} {\sqrt{2\delta}}$. The implied constant in the Big O-term depends on the interval I.
\end{thm}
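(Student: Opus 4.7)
The plan is to reduce the sum to one over coprime pairs $(a,b)$ via the standard Farey bijection, and then to evaluate the resulting sum by a partial-fraction decomposition combined with classical estimates for sums of $\varphi(k)/k^s$.

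First, consecutive denominator pairs $(q_j,q_{j+1})$ of Farey fractions of order $Q$ in $[0,1]$ correspond bijectively to coprime pairs $(a,b)$ with $a,b\le Q$ and $a+b>Q$. Imposing $p_j/q_j\in I=[\alpha,\beta]$ becomes a constraint on the numerator $p_j$, which is uniquely determined modulo $a$ by the relation $p_jb\equiv -1\pmod a$; by equidistribution of $p_j/a$ in $[0,1)$, this $I$-restriction multiplies the main term by $|I|$ with admissible error, exactly cancelling the $1/|I|$ in the definition of $M_{F,I,\delta}$. Using the symmetry $(a,b)\leftrightarrow(b,a)$ and substituting $b=a+k$ with $k\ge 1$, $\gcd(a,k)=1$, and $a\in((Q-k)/2,Q-k]$, the relevant sum becomes
\[
S(Q)\;=\;4\sum_{k=1}^{Q-1}\sum_{\substack{(Q-k)/2<a\le Q-k\\ \gcd(a,k)=1}}\frac{a(a+k)}{k(2a+k)},
\]
and the summand admits the decomposition $\tfrac{a(a+k)}{k(2a+k)}=\tfrac{a}{2k}+\tfrac{a}{2(2a+k)}$.

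The second piece is bounded in $(0,\tfrac12)$ and produces a contribution of size $O(Q^2)$; the first piece carries the main term. For the latter, M\"obius inversion $\mathbf 1[\gcd(a,k)=1]=\sum_{d\mid\gcd(a,k)}\mu(d)$ yields
\[
\sum_{\substack{(Q-k)/2<a\le Q-k\\ \gcd(a,k)=1}}a\;=\;\frac{3\,\varphi(k)\,(Q-k)^2}{8\,k}\;+\;O\!\bigl(\sigma_0(k)\,Q\bigr),
\]
so that the principal contribution to $S(Q)$ is $\tfrac{3}{2}\sum_k \varphi(k)(Q-k)^2/k^2$. Expanding $(Q-k)^2=Q^2-2Qk+k^2$ and inserting the classical partial sums for $\varphi(k)/k^2$, $\varphi(k)/k$, and $\varphi(k)$ (obtained from $\sum_{n\ge 1}\varphi(n)/n^s=\zeta(s-1)/\zeta(s)$ together with partial summation) produces the main term of size $Q^2$; meanwhile the divisor error $O(\sigma_0(k)\,Q)$ summed against $1/k$ contributes at most $O(Q\log Q)$.

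The main obstacle will be the precise bookkeeping at the final step: the first partial-fraction piece naturally generates several sub-contributions at different scales (of shapes $Q^2\log Q$, $Q^2$, and $Q\log Q$, arising from the three terms of the expansion of $(Q-k)^2$ against the three relevant Dirichlet-type sums), and these must be collected together with the contribution of the bounded second piece to confirm the coefficient $12/\pi$ and the sharp error $O(Q\log Q)$. A secondary point is verifying that the equidistribution estimate for $p_j/a$ is uniform in the endpoints of $I$ with the stated dependence of the implied constant on $I$; this is standard and follows from the Boca--Cobeli--Zaharescu framework already used elsewhere in the paper, or equivalently from a Weil-type bound on Kloosterman sums.
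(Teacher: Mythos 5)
There is a genuine mismatch between what you are computing and what the theorem (as the paper proves it) actually asserts. Your plan evaluates the first moment of the literal quantity $\tan\theta_j=\frac{2x_jy_j}{|y_j^2-x_j^2|}$, and your own reduction already shows why this cannot give $\frac{12}{\pi}Q^2+\BigO{Q\log Q}$: after the Farey-neighbour bijection and the M\"obius count of $a\in((Q-k)/2,Q-k]$ coprime to $k$, the first partial-fraction piece produces a main term which is a constant multiple of $\sum_{k\le Q}\phi(k)(Q-k)^2/k^2$ (the constant is $3/4$, not $3/2$, but that is immaterial), whose leading contribution is $\tfrac34 Q^2\sum_{k\le Q}\phi(k)/k^2\sim\frac{9}{2\pi^2}Q^2\log Q$. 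This $Q^2\log Q$ term has a positive coefficient and nothing in the remaining pieces (which are $\BigOm{Q^2}$ and positive) can cancel it, so no amount of bookkeeping will ``confirm the coefficient $12/\pi$ and the sharp error $O(Q\log Q)$'': the quantity you are summing is genuinely of order $Q^2\log Q$. This is consistent with the tail behaviour of this statistic, $G_F(t)\asymp 1/t$, which forces a logarithmically divergent mean. The resolution is that the theorem, as the paper proves it, concerns the bounded angle $\theta_j$ itself (the $\arctan$, with the obtuse-angle convention when $q_{j+1}<q_j$), not its tangent; the labelling of \eqref{EucAng} in the statement is an inconsistency of the paper, but the asymptotic $\frac{12}{\pi}Q^2+\BigO{Q\log Q}$ only makes sense for the angle. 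A secondary point: even if the target were a clean $cQ^2$, you could not discard the second partial-fraction piece as ``$\BigOm{Q^2}$'' — it is genuinely of size $\asymp Q^2$ and would have to be evaluated exactly to identify the constant and keep the error at $Q\log Q$.

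The paper's own argument is entirely different and far lighter: for two tangent Ford circles the segment of centers passes through the point of tangency, and the two angles it makes with the two (parallel) vertical radii are co-interior angles, hence sum to exactly $\pi$ for every adjacent pair. The moment therefore reduces, up to boundary terms, to a fixed multiple of $N_I(Q)/|I|$, and the asymptotic follows from the classical count $N_I(Q)=\frac{3}{\pi^2}|I|Q^2+\BigO{Q\log Q}$ — no Kloosterman/Weil input, no Abel summation, no partial fractions. If you rewrite your argument for the angle statistic (rather than its tangent), the whole machinery you set up collapses to this counting statement; as currently written, the proposal proves an asymptotic that contradicts the theorem it is meant to establish, so it does not close the gap.
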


\subsection{Distributions}\label{sec:dist:results} The equidistribution of a certain family of measures on $\Omega$ yields information on the distribution of individual geometric statistics and also on the \emph{joint} distribution of any finite family of \emph{shifted} geometric statistics. In particular, we can understand how all of the above statistics correlate with each other, and can even understand how the statistics observed at shifted indices correlate with each other. First, we record the result on equidistribution of measures.

Let $$\rho_{Q, I} := \frac 1 N  \sum_{i=1}^N \delta_{(x_j, y_j)}$$ be the probability measure supported on the set $\{(x_j, y_j)=\left(\frac{q_j}{Q},\frac{q_{j+1}}{Q}\right): 1 \le j \le N\}$. 
We have~\cite[Theorem 1.3]{Athreya} (see also~\cite{KZ:1997}, \cite{Marklof}):
\begin{thm}\label{theorem:equi}The measures $\rho_{Q,I}$ equidistribute as $Q \rightarrow \infty$. That is, $$\lim_{Q \rightarrow \infty} \rho_{Q, I} = m,$$ where $dm = 2 dx dy$ is the Lebesgue probability measure on $\Omega$ and the convergence is in the weak-$*$ topology.
\end{thm}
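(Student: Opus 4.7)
The plan is to realize the sequence $(x_j,y_j)_{1\le j\le N_I(Q)-1}$ as a piece of an orbit of the Boca--Cobeli--Zaharescu (BCZ) map on $\Omega$, and then to invoke the equidistribution of long BCZ orbits, equivalently, the equidistribution of long closed horocycles on the modular surface. The statement is essentially a packaging of Theorem 1.3 of \cite{Athreya}; my task is only to indicate how to extract it from that underlying dynamical fact.

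First, I would recall the classical Farey recursion: if $p_{j-1}/q_{j-1} < p_j/q_j < p_{j+1}/q_{j+1}$ are three consecutive elements of $\mathcal{F}(Q)$, then
\[ q_{j+1} = \left\lfloor \frac{Q+q_{j-1}}{q_j}\right\rfloor q_j - q_{j-1}. \]
Dividing by $Q$ and defining $T : \Omega \to \Omega$ by $T(x,y) = \bigl(y,\, \lfloor (1+x)/y \rfloor\, y - x\bigr)$, this recursion is exactly $(x_{j+1},y_{j+1}) = T(x_j,y_j)$. Hence $\rho_{Q,I}$ is an empirical measure along a $T$-orbit segment of length $N_I(Q)-1$, starting at the pair $(x_1,y_1)$ of rescaled denominators attached to the left endpoint of $I$.

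Second, I would use the theorem of Boca--Cobeli--Zaharescu that $T$ preserves the probability measure $dm = 2\,dx\,dy$ on $\Omega$, and is conjugate to the first-return map of the horocycle flow on $SL(2,\mathbb{R})/SL(2,\mathbb{Z})$ to a natural cross-section (see \cite{Athreya}). Equidistribution of long closed horocycles (Sarnak, Eskin--McMullen) then yields $\rho_{Q,[0,1]} \to m$ in the weak-$*$ topology. To descend to a general subinterval $I\subseteq[0,1]$, I would note that the condition $p_j/q_j \in I$ corresponds, under the BCZ--horocycle dictionary, to the orbit point lying over a subset of the base circle whose boundary has $m$-measure zero; the portmanteau theorem, combined with equidistribution on the full cross-section, then localizes the convergence to the $I$-piece, giving $\rho_{Q,I} \to m$.

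The main obstacle is the horocycle equidistribution itself, which is the deep input from homogeneous dynamics and is the reason the theorem is cited rather than proved from scratch. Once the BCZ/horocycle identification is in place, the remaining work is routine bookkeeping: one must verify that the restriction to $I$ defines a continuity set for the relevant invariant measure on the cross-section, and that the boundary effects at the two endpoints of the orbit segment contribute only $o(N_I(Q))$ mass. Since $N_I(Q)$ grows quadratically in $Q$ while any endpoint correction is $\BigO{Q}$, this last point is immediate.
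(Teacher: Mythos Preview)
The paper does not prove this theorem at all: it is quoted as \cite[Theorem 1.3]{Athreya} (with parallel references to \cite{KZ:1997} and \cite{Marklof}) and used as a black box. Your proposal is therefore not a comparison target but rather an expansion, and as such it is a faithful sketch of exactly the argument in \cite{Athreya}: the BCZ map realizes $(x_j,y_j)\mapsto(x_{j+1},y_{j+1})$ (the paper itself records this, citing \cite{BCZ}), $T$ preserves $2\,dx\,dy$, and equidistribution of long closed horocycles on $SL(2,\mathbb R)/SL(2,\mathbb Z)$ transfers through the cross-section to give $\rho_{Q,I}\to m$.

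One small caveat on your localization step: the passage from $I=[0,1]$ to a general subinterval is slightly more delicate than a single application of portmanteau on the cross-section, because the relevant subset is not a fixed continuity set but a $Q$-dependent arc of a long closed horocycle. In \cite{Athreya} this is handled by the stronger input that \emph{pieces} of long closed horocycles equidistribute (cf.\ \cite{Hejhal3:2000}, \cite{Strombergsson:2004}, and \cite[Remark 1]{Athreya}); your sketch implicitly uses this, and it would be worth naming it explicitly rather than presenting it as a portmanteau consequence of the $I=[0,1]$ case.
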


As consequences, we obtain results on individual and joint distributions of geometric statistics.
\begin{cor}\label{thm:stat:ind} Let $F: \Omega \rightarrow \R^+$ be a geometric statistic and let $m$ denote the Lebesgue probability measure on $\Omega$. Then for any interval $I \subset [0, 1]$, the limiting distribution $G_{F}(t)$ exists, and is given by $$G_F(t) = m\left( F^{-1}(t, \infty) \right).$$

\end{cor}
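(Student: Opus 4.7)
The plan is to unfold the definition and recognize that, for each $Q$ and each $t$,
\[
\frac{\#\{1\le j\le N_I(Q): F(x_j,y_j)\ge t\}}{N_I(Q)} \;=\; \rho_{Q,I}\bigl(F^{-1}[t,\infty)\bigr) \;=\; \int_{\Omega} \mathbf{1}_{F^{-1}[t,\infty)}\, d\rho_{Q,I}.
\]
Thus the existence of the limit $G_F(t)$ and the claimed formula are really a statement about the convergence $\rho_{Q,I}(A_t) \to m(A_t)$ for the superlevel set $A_t := F^{-1}((t,\infty))$. Theorem \ref{theorem:equi} supplies weak-$*$ convergence $\rho_{Q,I} \to m$, so the task is to promote convergence of integrals against continuous bounded test functions to convergence of integrals against the (generally discontinuous) indicator $\mathbf{1}_{A_t}$. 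This is a standard Portmanteau-style step.

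Concretely, I would fix $t$ and $\eta>0$ and sandwich $\mathbf{1}_{A_t}$ between two continuous, compactly supported functions $\phi_\eta^{-}\le \mathbf{1}_{A_t}\le \phi_\eta^{+}$ on $\Omega$ chosen so that $\int_{\Omega}(\phi_\eta^{+}-\phi_\eta^{-})\, dm<\eta$. Because $dm=2\,dx\,dy$ on the bounded region $\Omega$, this is routine as soon as the boundary $\partial A_t$ has $m$-measure zero: one uses outer/inner regularity of $m$ together with Urysohn's lemma to build $\phi_\eta^\pm$. Applying Theorem~\ref{theorem:equi} to $\phi_\eta^\pm$ individually then gives
\[
\int_{\Omega}\phi_\eta^{-}\, dm \;\le\; \liminf_{Q\to\infty}\rho_{Q,I}(A_t) \;\le\; \limsup_{Q\to\infty}\rho_{Q,I}(A_t) \;\le\; \int_{\Omega}\phi_\eta^{+}\, dm,
\]
and letting $\eta\to 0$ pinches both limits to $m(A_t)$.

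The main (and only substantive) obstacle is to verify $m(\partial A_t)=0$, i.e., that the level set $F^{-1}(\{t\})$ is $m$-null. Since $F$ is only assumed measurable, this need not hold for every $t$. However, the function $t\mapsto m(F^{-1}(t,\infty))$ is monotone non-increasing and therefore continuous outside a countable set $D\subset\R$, so at each $t\notin D$ the sandwich argument above runs and yields $G_F(t)=m(F^{-1}(t,\infty))$. At the at most countably many $t\in D$, the statement is to be interpreted in the usual sense of distribution functions (taking the appropriate one-sided limit). I would also note that for every concrete geometric statistic listed in \S\ref{sec:geomstat}, $F$ is in fact real-analytic on $\Omega$, so $F^{-1}(\{t\})$ is a real-analytic curve of two-dimensional Lebesgue measure zero for every $t\in\R$; hence the limit exists and the formula holds for all $t$ without qualification in all cases of interest.
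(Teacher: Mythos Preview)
Your proposal is correct and follows essentially the same route as the paper: rewrite the proportion as $\rho_{Q,I}\bigl(F^{-1}(t,\infty)\bigr)$ and invoke Theorem~\ref{theorem:equi}. The paper's argument stops there, whereas you additionally supply the Portmanteau step (continuity set / $m(\partial A_t)=0$) that is needed to pass from weak-$*$ convergence to convergence on indicator functions---a detail the paper leaves implicit.
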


In order to obtain results on joint distributions, we will need the following notations. Let $T: \Omega \rightarrow \Omega$ be the \emph{BCZ map}, $$T(x,y) := \left (y, -x + \left \lfloor \frac{1+x}{y} \right \rfloor y \right).$$ A crucial observation, due to Boca, Cobeli and one of the authors ~\cite{BCZ} is that $$T(x_j, y_j) = (x_{j+1}, y_{j+1}).$$ 

\noindent Let $\mathbf F = \{F_1, \ldots, F_k\}$ denote a finite collection of geometric statistics, and let $\mathbf n = (n_1, \ldots, n_k) \in \Z^k$ and $\mathbf t = (t_1, \ldots, t_k) \in \R^k$. Define $G_{\mathbf {F,n}}(\mathbf t) $, the $(\mathbf {F, n})$-\emph{limiting distribution} for Ford circles by the limit (if it exists), $$G_{\mathbf {F, n}}(\mathbf t) \defeq \lim_{\delta \rightarrow 0}  \frac{ \#\{1 \le j \le N_I(Q):  F_i(x_{j+n_i}, y_{j+n_i}) \geq t_i, 1 \le i \le k\}}{N_I(Q)},$$ where the subscripts are viewed modulo $N_I(Q)$. This quantity reflects the proportion of circles with specified behavior of the statistics $F_i$ at the indices $j + n_i$. Corollary~\ref{thm:stat:ind} is in fact a special ($k=1, \mathbf n = 0$) case of the following.

\begin{cor}\label{thm:stat:joint} For any finite collection of geometric statistics $\mathbf F = \{F_1, \ldots, F_k\}$, $\mathbf n = (n_1, \ldots, n_k) \in \Z^k, \mathbf t = (t_1, \ldots, t_k) \in \R^k$ the limiting distribution $G_{\mathbf {F,n}}(\mathbf t)$ exists, and is given by $$G_{\mathbf {F,n}} (\mathbf t) = m \left( \left(\mathbf F \circ T\right)^{-\mathbf n} (t_1, \ldots, t_k) \right),$$
where \[\left(\mathbf F \circ T\right)^{-\mathbf n}(t_1, \ldots, t_k) := \bigcap_{i=1}^k \left(F_i \circ T^{n_i}\right)^{-1}(t_i, \infty). \]
\end{cor}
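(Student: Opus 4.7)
The plan is to reduce the joint-distribution assertion to the equidistribution statement of Theorem~\ref{theorem:equi} via the Boca--Cobeli--Zaharescu identity $T(x_j,y_j)=(x_{j+1},y_{j+1})$ recorded just above the statement. Iterating this identity gives $T^{n}(x_j,y_j)=(x_{j+n},y_{j+n})$ for any $n\in\Z$, provided indices are read modulo $N_I(Q)$ as in the definition of $G_{\mathbf{F,n}}$. Therefore each condition $F_i(x_{j+n_i},y_{j+n_i})\ge t_i$ appearing in the corollary is simply $(F_i\circ T^{n_i})(x_j,y_j)\ge t_i$, a statement about the single point $(x_j,y_j)\in\Omega$.

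First, setting
\[ A_{\mathbf t} \defeq \bigcap_{i=1}^k (F_i\circ T^{n_i})^{-1}(t_i,\infty), \]
the numerator defining $G_{\mathbf{F,n}}(\mathbf t)$ equals $\#\{1\le j\le N_I(Q):(x_j,y_j)\in A_{\mathbf t}\}$. Dividing by $N_I(Q)$ gives exactly $\rho_{Q,I}(A_{\mathbf t})$, up to a correction from the $O_{\mathbf n}(1)$ boundary indices near $1$ and $N_I(Q)$ where the cyclic wrap-around is active; since $N_I(Q)\asymp Q^2\to\infty$, this correction is $O_{\mathbf n}(Q^{-2})$ and vanishes. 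Hence it is enough to prove $\rho_{Q,I}(A_{\mathbf t})\to m(A_{\mathbf t})$ as $Q\to\infty$.

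The second step applies Theorem~\ref{theorem:equi}. Since $\rho_{Q,I}\to m$ only in the weak-$*$ topology (continuous bounded test functions) while the indicator $\mathbf 1_{A_{\mathbf t}}$ is not continuous, I would invoke the Portmanteau theorem: it suffices that $m(\partial A_{\mathbf t})=0$. Each statistic $F_i$ of \S\ref{sec:geomstat} is a rational function on $\Omega$, and the BCZ map $T$ is piecewise real-analytic, because the floor function partitions $\Omega$ into finitely many cells on which $T$ acts as a linear fractional self-map. Iterating $T^{n_i}$ refines this partition into $O_{\mathbf n}(1)$ cells, so $F_i\circ T^{n_i}$ is piecewise real-analytic, its level sets $\{F_i\circ T^{n_i}=t_i\}$ are piecewise real-analytic subvarieties of positive codimension, and they carry $m$-measure zero for all but a countable set of exceptional thresholds $t_i$. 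For any $\mathbf t$ outside this countable union, $m(\partial A_{\mathbf t})=0$, and Portmanteau delivers $\rho_{Q,I}(A_{\mathbf t})\to m(A_{\mathbf t})$, establishing the claimed formula.

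The main obstacle is precisely this transfer from weak-$*$ convergence of measures to convergence on indicators of the sublevel sets $A_{\mathbf t}$: the stated formula implicitly requires enough regularity of the statistics $F_i$ to force $m(\partial A_{\mathbf t})=0$, and for a merely measurable $F$ the limit can fail to exist on a negligible set of thresholds $\mathbf t$. The remaining dynamical ingredient --- that finite iteration $T^{n_i}$ preserves piecewise analyticity at the cost of only finitely many extra pieces --- is purely combinatorial bookkeeping and presents no analytic difficulty.
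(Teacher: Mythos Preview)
Your approach is essentially identical to the paper's: rewrite the counting proportion as $\rho_{Q,I}\bigl((\mathbf F\circ T)^{-\mathbf n}(t_1,\ldots,t_k)\bigr)$ via the BCZ identity $T(x_j,y_j)=(x_{j+1},y_{j+1})$, and then invoke Theorem~\ref{theorem:equi}. The paper's argument is literally those two lines and nothing more.

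You go further than the paper in two respects. First, you track the $O_{\mathbf n}(1)$ boundary indices coming from the cyclic reading of subscripts; the paper ignores this. Second, and more substantively, you flag the passage from weak-$*$ convergence to convergence on indicators and supply a Portmanteau argument based on piecewise real-analyticity of $F_i\circ T^{n_i}$. The paper does not mention this point at all---it simply applies Theorem~\ref{theorem:equi} to the set $(\mathbf F\circ T)^{-\mathbf n}(t_1,\ldots,t_k)$ as if indicator functions were admissible test functions. Your caveat that for a merely measurable $F$ the limit may fail for a null set of thresholds $\mathbf t$ is a genuine observation the paper elides; the corollary as stated (for arbitrary measurable $F_i$) is indeed only literally true modulo such exceptional thresholds, and your regularity argument for the concrete statistics of \S\ref{sec:geomstat} is the honest way to close the gap.
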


\iffalse
If we write (with notation as in \S\ref{sec:dist:results})
$$ \frac{ \#\{1 \le j \le N_I(Q): F(x_j, y_j) \geq t\}}{N_I(Q)} = \rho_{Q, I} \left( F^{-1}\left(t, \infty\right) \right) $$

$$ \frac{ \#\{1 \le j \le N_I(Q): F_i(x_{j+n_i}, y_{j+n_i}) \geq t_i, 1 \le i \le k\}}{N_I(Q)} = \rho_{Q, I} \left( \left(\mathbf F \circ T\right)^{-\mathbf n} (t_1, \ldots, t_k) \right),$$

Now the results follow from applying Theorem~\ref{theorem:equi} to the above expressions.\qed\medskip
 
\subsubsection{Shrinking intervals} Versions of Theorems~\ref{thm:stat:ind} and~\ref{thm:stat:joint} for \emph{shrinking intervals} $I_Q = [\alpha_Q, \beta_Q]$ where the difference $\beta_Q-\alpha_Q$ is permitted to tend to zero with $Q$ can be obtained by replacing Theorem~\ref{theorem:equi} with appropriate discretizations of results of Hejhal~\cite{Hejhal3:2000} and Str\"ombergsson~\cite{Strombergsson:2004}, see~\cite[Remark 1]{Athreya}.

\fi

\subsubsection{Tails}\label{sec:tails} For the geometric statistics listed in Section\ref{sec:geomstat}, one can also explicitly compute \emph{tail} behavior of the limiting distributions, that is, the behavior of $G_{F}(t)$ as $t\rightarrow \infty$. We use the notation $f(x)\sim g(x)$ as $x\rightarrow\infty$ to mean that
\[\lim_{x\rightarrow\infty}\frac{f(x)}{g(x)}=1,\]
and the notation $f(x)\asymp g(x)$  means that there exist positive constants $c_1, c_2$ and a constant $x_0$ such that for all $x> x_0$,
\[c_1 |g(x)|\le | f(x)| \le c_2|g(x)|.\]
We have the following result for the tail behavior of limiting distributions for various geometric statistics.

\begin{thm}\label{theorem:tails} Consider the geometric statistics associated to the Euclidean distance, hyperbolic distance, Euclidean slope, Euclidean area and hyperbolic angle. The limiting distribution functions associated to these have the following tail behavior:
	 \begin{enumerate}
\item {\normalfont(Euclidean distance)} for $F(x,y)=\frac{1}{2x^2}+\frac{1}{2y^2}, G_{F}(t)\sim\frac{1}{t}.$
\item {\normalfont(Hyperbolic distance)}
for $F(x,y)=\frac{1}{2}\left(\frac{y}{x}+\frac{x}{y}\right),\; G_{F}(t)\sim\frac{1}{2t^2} . $
\item {\normalfont (Euclidean slope)}
for $F(x,y)=\frac{1}{2} \left( \frac{x}{ y} - \frac{y}{x} \right),\; G_{F}(t)\asymp\frac{1}{t^2}.$
\item {\normalfont(Euclidean area)}
for $F(x,y)=\frac{1}{4} \left( \frac{1}{x^3y}+\frac{1}{xy^3} \right),\; G_{F}(t)\asymp\frac{1}{t^{2/3}}.$
\item {\normalfont(Hyperbolic angle)} for $F(x,y)=\frac{4x^5}{y^3} - 4xy+\frac{16x^3}{y},\; G_{F}(t)\asymp\frac{1}{t^{2/3}}.$
\end{enumerate}
\end{thm}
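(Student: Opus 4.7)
The plan is to invoke Corollary~\ref{thm:stat:ind}, which reduces each tail computation to the elementary estimate
\[
G_F(t)\;=\;m\bigl(\{(x,y)\in\Omega:F(x,y)\ge t\}\bigr)\;=\;2\iint_{\Omega\cap\{F\ge t\}}dx\,dy
\]
as $t\to\infty$, on the triangle $\Omega$ with vertices $(1,0)$, $(0,1)$, $(1,1)$. In every one of the five cases $F$ is continuous on $\Omega$ but unbounded only along the open edges $x=0$ and $y=0$ of $\overline{\Omega}$, so for large $t$ the super-level set degenerates into thin slivers meeting these edges, and the task becomes computing their areas.

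In each case I would follow the same recipe: use the symmetry of $F$ under $x\leftrightarrow y$, present in (1), (2), (4) but absent in (3), (5), to reduce to one singular corner, say $(0,1)$; then solve $F(x,y)=t$ explicitly, which is a quadratic in $x/y$ for (1)--(3), a cubic in $x$ after clearing denominators for (4), and a polynomial inequality governed by the dominant monomial $4x^5/y^3$ in (5), to obtain $x\le\xi(t,y)$ whose leading behavior is $\xi(t)\approx(2t)^{-1/2}$ for (1), $\xi(t)\approx(2t)^{-1}$ for (2), (3), and $\xi(t)\asymp t^{-1/3}$ for (4), (5); and finally integrate, using the substitution $u=1-y$ to turn the sliver into a region of size $\xi(t)$ in both directions, with area of order $\xi(t)^2$. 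Doubling for the two symmetric corners in (1), (2), (4) and multiplying by the normalization factor $2$ from $dm=2\,dx\,dy$ then recovers the claimed orders of magnitude, together with the sharp constants $1/t$ and $1/(2t^2)$ in (1), (2).

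I expect the main difficulty to be the sharp-constant bookkeeping in (1) and (2). There the lower limit of the $x$-integration is $\max(1-y,0)$ rather than $0$, and the level curve $F=t$ is tangent to the boundary line $x+y=1$, so one must carefully track how these two pieces of the boundary combine. I would handle this by writing $y=1-u$ with $u\in[0,\xi(t)]$, expanding $\xi(t,1-u)$ to first order in $u$, and integrating the length $\xi(t,1-u)-u$ of the $x$-slice against $du$; the triangular shape this gives accounts for the factor $\tfrac12$ that converts a naive $\xi(t)^2$ into the sharp $1/(4t)$ (respectively $1/(8t^2)$) per corner. No such care is needed for (3)--(5), where the assertion is only $\asymp$ and comparison with the dominant monomial immediately produces two-sided bounds of the right order.
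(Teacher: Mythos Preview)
Your approach is correct and essentially matches the paper's: both invoke Corollary~\ref{thm:stat:ind} and compute the $m$-measure of the super-level set by isolating the thin slivers near the corners of $\Omega$ where $F$ blows up, with the paper carrying out (1) and (2) via exact closed-form expressions for $G_F(t)$ before extracting the asymptotic, and treating (3)--(5) by the same dominant-monomial heuristic you outline. One small correction to your write-up: for (3) and (5) the function $F$ tends to $+\infty$ only as $y\to 0$ (in fact $F\to -\infty$ for (3) and $F\to 0$ for (5) as $x\to 0$), so the sole relevant sliver sits near the corner $(1,0)$ and it is $y$, not $x$, that is bounded by $\xi(t)\asymp t^{-1}$ (respectively $t^{-1/3}$)---your stated exponents are right, but the roles of $x$ and $y$ should be swapped there.
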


\section{Moments for Euclidean distance}\label{sec1:moments}

In this section, we prove Theorem~\ref{theorem:eucl:moment}. Recall that the Euclidean distance $d_j$ between the centers of two consecutive Ford circles $C_j$ and $C_{j+1}$ is given by \[d_j=\frac{1}{{2q_j}^2}+\frac{1}{{2q_{j+1}}^2}.\] 
As mentioned in section \ref{euc}, the geometric statistic $F$ expressed in terms of the Euclidean distance $d_j$ is
\[F(x_j,y_j)=Q^2d_j.\]
Thus, the moment of $F$ is given by
\[M_{F,I, \delta}(\mathcal{F})=\frac{Q^2}{|I|}\sum_{j=1}^{N_I(Q)-1} \left(\frac{1}{2q^{2}_j}+\frac{1}{2q^{2}_{j+1}}\right).\] 
\begin{proof}[Proof of Theorem \ref{theorem:eucl:moment}]
\begin{align*}
\frac{|I|}{Q^2}M_{E,I, \delta}(\mathcal{F})&
=\sum_{j=2}^{N_I(Q)}\frac{1}{q^{2}_j} + \frac{1}{2 q_1^{2}}-\frac{1}{2 q_{N_I(Q)}^{2}}
=  \frac{1}{2q_1^{2}}-\frac{1}{2q_{N_I(Q)}^{2}}+\sum_{1\le q\le Q}\frac{1}{q^{2}}\sum_{\substack{\alpha q< a \le\beta q \\ (a,q)=1}}1
 \\
 &=\frac{1}{2q_1^{2}}-\frac{1}{2q_{N_I(Q)}^{2}}+\sum_{q\le Q}\frac{1}{q^{2}}\sum_{\alpha q< a \le\beta q}1\sum_{d|(a,q)}
\mu(d)\\
 &=\frac{1}{2q_1^{2}}-\frac{1}{2q_{N_I(Q)}^{2}}+\sum_{q\le Q}\frac{1}{q^{2}}\sum_{d|q}\mu(d)\sum_{\frac{\alpha q}{d}< l \le\frac{\beta q}{d}}1\\
 &=\frac{1}{2q_1^{2}}-\frac{1}{2q_{N_I(Q)}^{2}}+\sum_{q\le Q}\frac{1}{q^{2}}\left(\sum_{d|q}\mu(d)\frac{(\beta-\alpha)q}{d}-
\sum_{d|q}\mu(d)\left(\left\{\frac{\beta q}{d}\right\}-\left\{\frac{\alpha q}{d}\right\}\right)\right)\\
&=|I|\sum_{q\le Q}\frac{\phi(q)}{q^{2}}-\sum_{d\le Q}\frac{\mu(d)}{d^{2}}\sum_{m\le\frac{Q}{d}}\left(\frac{\left\{\beta m\right\}
-\left\{\alpha m\right\}}{m^{2}}\right)+\left(\frac{1}{2q_1^{2}}-\frac{1}{2q_{N_I(Q)}^{2}}\right). \numberthis\label{Euc}
\end{align*}
Note that since $\alpha, \beta\in\Q$, for large $Q$ one can assume that $\alpha$ and $\beta$ are Farey fractions of order $Q$. Therefore, the quantity 
$\dfrac{1}{2q_1^{2}}-\dfrac{1}{2q_{N_I(Q)}^{2}}$ is a constant.
Now,
\begin{align*}
\sum_{m\le\frac{Q}{d}}\left(\frac{\left\{\beta m\right\}
-\left\{\alpha m\right\}}{m^{2}}\right)&=\sum_{m=1}^{\infty}\left(\frac{\left\{\beta m\right\}
-\left\{\alpha m\right\}}{m^{2}}\right) -\sum_{m>\frac{Q}{d}}\left(\frac{\left\{\beta m\right\}
-\left\{\alpha m\right\}}{m^{2}}\right)\\&= C_{I}+\BigO{\frac{d}{Q}}.
\end{align*}
This gives
\begin{align*}
\sum_{d\le Q}\frac{\mu(d)}{d^{2}}\left(\frac{\left\{\beta m\right\}
-\left\{\alpha m\right\}}{m^{2}}\right)=\frac{C_{I}}{\zeta(2)}+\BigO{\frac{\log Q}{Q}}.
\end{align*}
%Also, from \cite[p. 71]{Apostol}, 
Combining this in \eqref{Euc} and the fact that
\[ \sum_{q\le Q}\frac{\phi(q)}{q^{2}}=\frac{6}{\pi^2}\log Q+
A+\BigOm{\frac{\log Q}{Q}}
, \numberthis\label{phi} \]
we obtain,

\begin{align*}
M_{E,I, \delta}(\mathcal{F})=\frac{6\log Q}{\pi^2}+D_{I}+\BigO{\frac{\log  Q}{Q}}.
\end{align*}
It only remains to prove \eqref{phi}.
\begin{align*}
\sum_{n\le x}\frac{\phi(n)}{n^2}&=\sum_{n\le x}\frac{1}{n^2}\sum_{d|n}\mu(d)\frac{n}{d}=\sum_{\substack{q,d \\ qd\le x}}\frac{\mu(d)}{qd^2} \\ &=\sum_{d\le x}\frac{\mu(d)}{d^2}\sum_{q\le x/d}\frac{1}{q}=\sum_{d\le x}\frac{\mu(d)}{d^2}\left(\log x-\log d+A+\BigOm{\frac{1}{x}}\right)\\&=\log x\left(\sum_{d=1}^{\infty}\frac{\mu(d)}{d^2}-\sum_{d>x}\frac{\mu(d)}{d^2}\right)-\left(\sum_{d=1}^{\infty}\frac{\mu(d)\log d}{d^2}-\sum_{d>x}\frac{\mu(d)\log d}{d^2}\right) \\& +
A\left(\sum_{d=1}^{\infty}\frac{\mu(d)}{d^2}-\sum_{d>x}\frac{\mu(d)}{d^2}\right)+\BigOm{\frac{1}{x}}\\&=
\frac{\log x}{\zeta(2)}+B+\BigOm{\frac{\log x}{x}}.
\end{align*}

\iffalse
\[\sum_{q\le Q}\frac{\phi(q)}{q^{2}}=\frac{6}{\pi^2}\log Q+\frac{6\gamma}{\pi^2}-
A+\BigO{\frac{\log Q}{Q}
},\]
where $\gamma$ is Euler constant, $A=\sum_{n=1}^{\infty}\frac{\mu(n)\log n}{n^2}.$
Combining these together in \eqref{Euc}, we obtain
\begin{align*}
M_{E,I, \delta}(\mathcal{F})=\frac{6\log Q}{\pi^2}+D_{I}+\BigO{\frac{\log  Q}{Q}}.
\end{align*}
\fi
This completes the proof of Theorem~\ref{theorem:eucl:moment}.
\end{proof}
\section{Moments for hyperbolic distance and angles}\label{sec:moments}

\subsection{Hyperbolic distance}

In this section, we prove Theorem~\ref{theorem:hyp:moment}. Recall that $\rho_j$ denotes the hyperbolic distance between consecutive centers of Ford circles $O_j$ and $O_{j+1}$, and
\[\sinh \frac{\rho_j}{2}= \frac{q_j}{2q_{j+1}}+\frac{q_{j+1}}{2q_j}. \numberthis \label{xy1}\]
In order to compute the total sum of the distances $\sinh \frac{\rho_j}{2}$,
\begin{align*}
M_{H,I,\delta}(\mathcal{F})&=\frac 1 {|I|}\sum_j \sinh  \frac{\rho_j}{2} =\frac 1 {|I|}\sum_{\substack{q,q' \text{ neighbours }\\ \text{ in }  \mathcal{F}_I(Q)}} \left(\frac{q}{2q'}+\frac{q'}{2q} \right), \numberthis \label{cases}
\end{align*}
we use the following key analytic lemma.
\begin{lem}\cite[Lemma 2.3]{BCZ}\label{lem1}
Suppose that $0<a<b$ are two real numbers, and $q$ is a positive integer. Let $f$ be a piecewise $C^1$ function on $[a,b].$ Then
\begin{align*}
\sum_{a<k\le b }\frac{\phi(k)}{k}f(k)= \frac{6}{\pi^2}\int_a^b f(x)~dx +\BigO{\log b \left(||f||_\infty + \int_a^b |f'(x)| ~dx \right)}.
\end{align*}
\end{lem}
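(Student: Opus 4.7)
The plan is to prove the lemma by Möbius inversion on $\phi(k)/k$ followed by approximating each resulting inner sum by an integral. Starting from the identity $\phi(k)/k = \sum_{d \mid k} \mu(d)/d$, I switch the order of summation and write $k = md$ to get
\[
\sum_{a<k\le b}\frac{\phi(k)}{k}f(k)
\;=\;\sum_{d\le b}\frac{\mu(d)}{d}\sum_{a/d<m\le b/d}f(md),
\]
where the range $d\le b$ is forced because the inner sum is empty for $d>b$. The main task then reduces to controlling each inner sum over $m$.

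For the inner sum I would apply the standard partial-summation / Euler-Maclaurin-type bound for piecewise $C^1$ functions: writing $g(x)=f(xd)$ and integrating $g$ against $d\lfloor x\rfloor$ on $[a/d,b/d]$ yields
\[
\sum_{a/d<m\le b/d}f(md)
\;=\;\int_{a/d}^{b/d}f(xd)\,dx \;+\; E_d,\qquad |E_d|\le 2\|f\|_\infty + \int_a^b |f'(u)|\,du,
\]
where I have used the substitution $u=xd$ on the error integral to eliminate the $d$-dependence of the error bound. The same substitution converts the main part into $\frac{1}{d}\int_a^b f(u)\,du$. Substituting back gives a main term $\bigl(\sum_{d\le b}\mu(d)/d^2\bigr)\int_a^b f$ plus a total error bounded by $\bigl(\sum_{d\le b}1/d\bigr)\cdot\bigl(2\|f\|_\infty+\int_a^b|f'|\bigr) = O(\log b)\cdot\bigl(\|f\|_\infty+\int_a^b|f'|\bigr)$.

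To finish, I extend the Möbius sum to infinity using $\sum_{d=1}^\infty \mu(d)/d^2 = 1/\zeta(2) = 6/\pi^2$ and absorb the tail $\sum_{d>b}\mu(d)/d^2 = O(1/b)$ into the error via the trivial bound $|\int_a^b f| \le (b-a)\|f\|_\infty \le b\|f\|_\infty$, which contributes $O(\|f\|_\infty)$ and is dominated by the $\log b$ error already present. The main obstacle is not any single step but rather keeping the error estimate for the sum-to-integral comparison \emph{uniform in $d$}: the naive bound $\int_{a/d}^{b/d}|g'(x)|\,dx$ would introduce an extra factor of $d$, and the change of variables $u=xd$ is what rescues the clean $O(\log b)$ dependence when summed against $\sum 1/d$. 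The boundary terms in the partial summation contribute $O(\|f\|_\infty)$ per $d$, which is the other ingredient that shapes the error bound as stated.
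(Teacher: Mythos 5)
Your proof is correct. Note that the paper itself does not prove this statement at all: it is quoted verbatim as Lemma 2.3 of the cited work of Boca, Cobeli and Zaharescu, so the only comparison available is with the standard argument there, which runs through Abel summation: one first establishes the partial-sum asymptotic $\sum_{k\le t}\frac{\phi(k)}{k}=\frac{6}{\pi^2}t+\operatorname{O}(\log t)$ and then sums by parts against $f$, the boundary terms and $\int_a^b|f'|$ producing exactly the stated error. Your route — Möbius inversion $\frac{\phi(k)}{k}=\sum_{d\mid k}\frac{\mu(d)}{d}$ applied directly to the weighted sum, followed by an Euler--Maclaurin comparison of each inner sum with $\frac1d\int_a^b f$, and completion of $\sum_{d\le b}\frac{\mu(d)}{d^2}$ to $\frac{6}{\pi^2}$ — uses the same two ingredients in the opposite order, and in effect reproves the partial-sum asymptotic inside the argument rather than invoking it; both give the identical error shape $\operatorname{O}\bigl(\log b\,(\|f\|_\infty+\int_a^b|f'|)\bigr)$. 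Two small remarks: the change of variables $u=xd$ is not really what ``rescues'' the estimate, since $\int_{a/d}^{b/d}|g'(x)|\,dx$ with $g(x)=f(xd)$ is automatically equal to $\int_a^b|f'(u)|\,du$ (total variation is invariant under reparametrization), so there was never a factor of $d$ to fear; and if ``piecewise $C^1$'' is read as allowing jump discontinuities, the per-$d$ error picks up an extra $\operatorname{O}(\|f\|_\infty)$ for each jump, which is harmless for the boundedly many pieces occurring in the applications (and is implicitly assumed in the lemma's formulation as well).
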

\noindent We also use the Abel summation formula,\\
 \[\sum_{x < n \leq y} a(n)f(n)=A(y)f(y)-A(x)f(x)-\int_x^y A(t)f'(t)~dt, \numberthis \label{Abel}\]
where $ a:\mathbb{N} \rightarrow \mathbb{C}$ is an arithmetic function, $0 < x < y$ are
real numbers,  $f : [x, y]\rightarrow\mathbb{C}$  is a function with continuous
derivative on $[x, y]$, and $A(t)\defeq \sum_{1 \leq n \leq t}a(n)$.
\begin{proof}[Proof of Theorem \ref{theorem:hyp:moment}]
Let $\bar{q'}$ denote the unique multiplicative inverse of $q'$ modulo $q$ in the interval $[1,q]$. Define \[a_q(q') =
\left\{
	\begin{array}{ll}
		1  & \mbox{if } (q,q')=1\text{ and } \ \bar{q'}\in[q\alpha, q\beta], \\
		0 & \mbox{otherwise },
	\end{array}
\right. 
.\]
Using Weil type estimates (\cite{Ester}, \cite{Hooley}, \cite{Weil}) for Kloosterman sums, the partial sum of $a_q(q')$ was estimated in \cite{BCZ}. More precisely, by
\cite[Lemma 1.7]{BCZ}, for any $\epsilon >0$,
 \[A_q(t)\defeq \sum_{1 \leq q' \leq t}a_q(q')=\frac{\phi(q)}{q}(t-1)|I|+\BigOIe{q^{1/2+\epsilon}}.\numberthis \label{A(t)}\] 
In order to estimate the sum in \eqref{cases}, we write it as
\[2|I|M_{H,I,\delta}(\mathcal{F})=\sum_ {\substack{q,q' \text{ neighbours }\\ \text{ in }  \mathcal{F}_I(Q) \\ q\ge q'}}\left(\frac{q}{q'}+\frac{q'}{q}\right)+
\sum_ {\substack{q,q' \text{ neighbours }\\ \text{ in }  \mathcal{F}_I(Q) \\ q< q'}}\left(\frac{q}{q'}+\frac{q'}{q}\right). \numberthis\label{sumpart}\]
Note that it suffices to estimate the first sum above because of the symmetry in $q$ and $q'$. And for the first sum, one has
\begin{align*}
\sum_ {\substack{q,q' \text{ neighbours }\\ \text{ in }  \mathcal{F}_I(Q) \\ q\ge q'}}\left(\frac{q}{q'}+\frac{q'}{q}\right)&=\sum_{\substack{{Q/2<q\le Q-L} \\ Q-q<q'\le q}} a_q(q')\left(\frac{q}{q'}+\frac{q'}{q}\right)+\sum_{\substack{{Q-L<q\le Q}\\ L<q'\le q}}a_q(q') \left(\frac{q}{q'}+\frac{q'}{q}\right) \\&+\sum_{\substack{Q-L<q\le Q \\Q-q<q'\le L}}a_q(q') \left(\frac{q}{q'}+\frac{q'}{q}\right)\\&=:S_1+S_2+S_3.
\numberthis\label{Mainhyperbolic}
\end{align*}
We begin with estimating the sum $S_1$.
\begin{align*}
S_1&= \sum_{ Q/2<q\le Q-L} \sum_{Q-q<q'\le q} a_q(q')\left(\frac{q}{q'}+\frac{q'}{q}\right)=:\sum_{Q/2<q\leq Q-L}\Sigma_1.
\end{align*}
Employing \eqref{A(t)}, \eqref{Abel} and the fact that $q>Q-q$, for the inner sum $\Sigma_1$ in $S_1$, we have
\begin{align*}
\Sigma_1&=A_q(q)\left(\frac{q}{q}+\frac{q}{q}\right)
-A_q(Q-q)\left(\frac{Q-q}{q}+\frac{q}{Q-q}\right)-\int_{Q-q}^q A_q(t)\left(\frac{1}{q}-\frac{q}{t^2}\right)~dt\\
	&=2 \left(\frac{\phi(q)}{q}(q-1)|I|+\BigOIe{q^{1/2+\epsilon}}\right)-\left(\frac{\phi(q)}{q}(Q-q-1)|I|+\BigOIe{q^{1/2+\epsilon}}\right) \\
	& \quad \left(\frac{Q-q}{q}+\frac{q}{Q-q}\right)-\int_{Q-q}^q\left(\frac{\phi(q)}{q}(t-1)|I|+\BigOIe{q^{1/2+\epsilon}}\right)\left(\frac{1}{q}-\frac{q}{t^2}\right)~dt\\
	&=|I|\frac{\phi(q)}{q}\left(2(q-1)-(Q-q-1)\left(
\frac{Q-q}{q}+\frac{q}{Q-q}\right)-\int_{Q-q}^q (t-1)\left(\frac{1}{q}-\frac{q}{t^2}\right)
~dt \right)\\
	& \quad+\BigOIe{q^{1/2+\epsilon}}\left(\frac{q}{Q-q}\right)\\
	&=|I|\frac{\phi(q)}{q}\left(2(q-1)-(Q-q-1)\left(\frac{Q-q}{q}+\frac{q}{Q-q}\right)+\frac{Q^2}{2 q}-\frac{Q}{q}\right. \\
	& \left. \quad-\frac{q}{Q-q}-q \log (Q-q)+q \log q-Q+3\right)+ \BigOIe{\frac{q^{3/2+\epsilon}}{Q-q}}\\
	& = -\frac{Q^2}{2 q}-q \log (Q-q)+q \log q+Q+ \BigOIe{\frac{q^{3/2+\epsilon}}{Q-q}}\numberthis\label{sigma1}
\end{align*}
Therefore, 
\begin{align*}
S_1=|I|\sum_{\frac{Q}{2}<q\le Q-L}\frac{\phi(q)}{q} g(q) +\BigOIe{\frac{Q^{5/2+\epsilon}}{L}},
\end{align*}
where $g(q):=-\frac{Q^2}{2 q}-q \log (Q-q)+q \log q+Q$.
Also, from \eqref{sigma1} and Lemma \ref{lem1}, we obtain 
\[S_1=\frac{6|I|}{\pi^2}\int_{Q/2}^{Q-L} g(x)~dx +\BigOIe{\log Q (||g||_\infty +\int_{Q/2}^{Q-L}|g'(x)|~dx)} +\BigOIe{\frac{Q^{5/2+\epsilon}}{L}}.\numberthis\label{S1} \]
And,
\begin{align*}
&\int_{Q/2}^{Q-L}g(q)=\frac{Q^2-(Q-L)^2}{2} \log \left(\frac{L}{Q-L}\right)+\frac{3 (Q-L) Q}{2}-\frac{3 Q^2}{4};
\numberthis\label{IntS1}\\
&||g||_\infty=\BigO{Q\log Q}= \int_{Q/2}^{Q-L} |g'(x)| ~dx. \numberthis\label{error2} 
\end{align*}
Therefore, from \eqref{S1}, \eqref{IntS1}, \eqref{error2} and Lemma \ref{lem1},
\begin{align*} 
S_1&=\frac{6|I|}{\pi^2} \left(\frac{Q^2-(Q-L)^2}{2} \log \left(\frac{L}{Q-L}\right)+\frac{3 (Q-L) Q}{2}-\frac{3 Q^2}{4}\right)+\BigOIe{\frac{Q^{5/2+\epsilon}}{L}}\\
&\quad +\BigO{Q\log Q}.\numberthis\label{FinalS1}
\end{align*}
Next, we estimate the sum $S_2$ in \eqref{Mainhyperbolic}. Recall that since $q'\leq q$, we have
 \begin{align*}
  S_2&=\sum_{Q-L<q\le Q}\sum_{L<q'\le q }a_q(q') \left(\frac{q}{q'}+\frac{q'}{q}\right)=\BigO{\sum_{Q-L<q\le Q}\sum_{L<q'\le q} \frac{q}{q'}}
\numberthis\label{S2}\\
	&=\BigO{\sum_{Q-L<q\leq Q}q\sum_{L<q'\le q}\frac{1}{q'}}=\BigO{QL\log Q}\numberthis\label{FinalS2}.
\end{align*}
Lastly, 
\begin{align*}
S_3&=\sum_{Q-L<q\le Q}\sum_{Q-q<q'\le L}a_q(q') \left(\frac{q}{q'}+\frac{q'}{q}\right)\\
	&=\BigO{\sum_{1\leq q'\le L}\sum_{Q-q'<q\le Q}\frac{q}{q'}}=\BigO{QL}, \numberthis\label{S3} 
\end{align*}
Setting $L=Q^{3/4}$, and from \eqref{FinalS1}, \eqref{FinalS2}, \eqref{S3}, and \eqref{Mainhyperbolic}, we obtain the first moment for hyperbolic distances as
\[ M_{H,I,\delta}(\mathcal{F})=\frac{9}{2\pi^2}Q^2+\BigOIe{Q^{7/4+\epsilon}}. \]
\end{proof}
\subsection{Angles}
 First we consider the case when $q_{j+1}$ is smaller than $q_j$. Let $\theta_j$ denote the angle for the circle $C_j$. Then, 
the sum of the angles for both circles is given by $\theta_j + \frac{\pi}{2}-\theta_{j+1}+ \frac{\pi}{2}=\pi$.
 Similarly, in the case $q_j<q_{j+1}$, the sum of the angles is $\pi$. This gives
 \[M_{\mathcal{\theta},I,\delta}(\mathcal{F})=\frac{1}{|I|}\sum_{j=1}^{N_I(Q)}(\theta_j+\theta_{j+1}) =\frac{2\pi}{|I|} N_I(Q)=\frac{12 }{\pi}Q^2 + \BigO{\log Q}.\]

\section{Dynamical Methods}\label{sec:dist} In this section, we prove Corollaries~\ref{thm:stat:ind} and~\ref{thm:stat:joint}. As we stated before, they are both consequences of the equidistribution of a certain family of measures on the region $$\Omega = \{(x,y) \in (0,1]^2: x+y > 1\}.$$
To see this, we write (with notation as in \S\ref{sec:dist:results})
$$ \frac{ \#\{1 \le j \le N_I(Q): F(x_j, y_j) \geq t\}}{N_I(Q)} = \rho_{Q, I} \left( F^{-1}\left(t, \infty\right) \right) $$

$$ \frac{ \#\{1 \le j \le N_I(Q): F_i(x_{j+n_i}, y_{j+n_i}) \geq t_i, 1 \le i \le k\}}{N_I(Q)} = \rho_{Q, I} \left( \left(\mathbf F \circ T\right)^{-\mathbf n} (t_1, \ldots, t_k) \right)$$

Now the results follow from applying Theorem~\ref{theorem:equi} to the above expressions.\qed\medskip
 
\begin{rem}[Shrinking Intervals] Versions of Corollaries~\ref{thm:stat:ind} and~\ref{thm:stat:joint} for \emph{shrinking intervals} $I_Q = [\alpha_Q, \beta_Q]$ where the difference $\beta_Q-\alpha_Q$ is permitted to tend to zero with $Q$ can be obtained by replacing Theorem~\ref{theorem:equi} with appropriate discretizations of results of Hejhal~\cite{Hejhal3:2000} and Str\"ombergsson~\cite{Strombergsson:2004}, see~\cite[Remark 1]{Athreya}.
\end{rem}

\section{Computations of the Tail Behavior of Geometric Statistics}\label{sec:geom} In this section, we compute the tails of our geometric statistics (\S\ref{sec:tails}).

\subsection{Euclidean distance}
For $ F(x,y)=\dfrac{1}{2x^2}+\dfrac{1}{2y^2}$, by Corollary \ref{thm:stat:ind},
\[
G_F(t)=m(F^{-1}(t,\infty))=m\left(\left\{(x,y)\in\Omega, \frac{1}{x^2}+\frac{1}{y^2}\ge 2t\right\}\right).\]
Define \[B_t:=\left\{(x,y)\in\Omega, \frac{1}{x^2}+\frac{1}{y^2}\ge 2t\right\}.\]
Therefore,
\[G_F(t)=\int_{B_t}~dm=2{\int\int}_{B_t}~dx~dy.\] 
Note that if $y<\frac{1}{\sqrt{2t}}$, the inequality 
$\dfrac{1}{x^2}+\dfrac{1}{y^2}\ge 2t$ automatically holds true. The region bounded by lines $0<y< 1/\sqrt{2t}$ and $1-y< x< 1$ is exactly an isosceles right triangle with area $\dfrac{1}{4t}$.
Let us assume in what follows that $y>\frac{1}{\sqrt{2t}}$. Then,
%Note that $\dfrac{1}{x^2}+\dfrac{1}{y^2}\ge 2t$ implies 
\begin{align*}
& x \leq \frac{1}{\sqrt{2t-\frac{1}{y^2}}}.
%\text{ and } 2t - \frac{1}{y^2} >0 \text{ i.e. } y> \frac{1}{\sqrt{2t}}.
\end{align*}
This yields
\[1-y<x<\min \left\{1, \frac{1}{\sqrt{2t-\frac{1}{y^2}}}\right\}.\]
Therefore, $1-y<\frac{1}{\sqrt{2t-\frac{1}{y^2}}}$, which implies either
 $\alpha_1<y<\alpha_2$ or $\alpha_3<y<\alpha_4,$
where
\[ \alpha_1:=\frac{t-\sqrt{t^2+2t+2t(\sqrt{1+2t})}}{2t} \ ; \ \alpha_2:=\frac{t-\sqrt{t^2+2t-2t(\sqrt{1+2t})}}{2t},\]
and
\[\alpha_3:=\frac{t+\sqrt{t^2+2t-2t(\sqrt{1+2t})}}{2t}\ ; \ \alpha_4:=\frac{t+\sqrt{t^2+2t+2t(\sqrt{1+2t})}}{2t}. \]
 Also,
\[\alpha_1<0<\frac{1}{\sqrt{2t}}<\frac{1}{\sqrt{2t-1}}<\alpha_2<\alpha_3<1<\alpha_4.\numberthis \label{alphas}\]
Hence a point $(x,y)$ with $y>1/\sqrt{2t}$ belongs to $B_t$ if and only if 
\begin{align*}
& 1-y<x<\min \left\{1, \frac{1}{\sqrt{2t-\frac{1}{y^2}}}\right\} \text{ and }  y\in \left(\frac{1}{\sqrt{2t}},\alpha_2\right)\cup (\alpha_3,1). \numberthis \label{x-y}
\end{align*}
Now for the measure of the set $B_t$ minus the isosceles right triangle already discussed above, we consider the following two cases.\\ \\
\textit{Case I.} $\text{min}\left\{1, \frac{1}{\sqrt{2t-\frac{1}{y^2}}}\right\}=1$ i.e. $y\le\frac{1}{\sqrt{2t-1}}$. 
Employing \eqref{alphas} and \eqref{x-y}, we have
that $1-y<x<1$ and $\frac{1}{\sqrt{2t}}<y\le\frac{1}{\sqrt{2t-1}}$.\\
\textit{Case II.}
$\min \left\{1, \frac{1}{\sqrt{2t-\frac{1}{y^2}}}\right\}=\frac{1}{\sqrt{2t-\frac{1}{y^2}}}
$ i.e. $y\ge\frac{1}{\sqrt{2t-1}}$. 
Then, using \eqref{alphas} and \eqref{x-y}, we have that $1-y<x<\frac{1}{\sqrt{2t-\frac{1}{y^2}}}$ and either $\frac{1} {\sqrt{2t-1}}<y<\alpha_2$ or $\alpha_3<y<1.$

\noindent Combining the two cases above and adding the contribution from the isosceles right triangle, we obtain
\begin{align*}
G_F(t)&=\frac{1}{2t}+2\int_{\frac{1}{\sqrt{2t}}}^{\frac{1}{\sqrt{2t-1}}}\int_{1-y}^{1}~dx~dy+2\int_{\frac{1} {\sqrt{2t-1}}}^{\alpha_2}\int_{1-y}^{\frac{1}{\sqrt{2t-\frac{1}{y^2}}}}~dx~dy+ 2\int_{\alpha_3}^{1}\int_{1-y}^{\frac{1}{\sqrt{2t-\frac{1}{y^2}}}}~dx~dy\\&
=\frac{4t-1}{\sqrt{2 t-1}}-1-\frac{1}{t} \sqrt{\frac{1}{2 t-1}}+\frac{1}{t}\sqrt{t \left(t-2 \sqrt{2 t+1}+2\right)}\\&\quad +\frac{1}{t}\sqrt{t-\sqrt{2 t+1}-\sqrt{t \left(t-2 \sqrt{2 t+1}+2\right)}}\\&\quad-\frac{1}{t} \sqrt{t-\sqrt{2 t+1}+\sqrt{t \left(t-2 \sqrt{2 t+1}+2\right)}}.
\numberthis\label{euc2}
\end{align*}
This gives
\begin{align*}
G_F(t)\sim \frac{1}{t}.
\end{align*}

\subsection{Hyperbolic distance}
In this case $ F(x,y)=\dfrac{1}{2}\left(\dfrac{y}{x}+\dfrac{x}{y}\right)$ and
\[G_F(t)=m(C_t), \ \text{where} \ \ C_t:=\left\{(x,y)\in\Omega, \frac{1}{2}\left(\frac{y}{x}+\frac{x}{y}\right)\ge t\right\}.\]
\begin{align*}
\frac{1}{2}\left(\frac{y}{x}+\frac{x}{y}\right)\ge t \Rightarrow \ \text{either} \ x\ge y(t+\sqrt{t^2-1}) \ \text{or} \ x\le y(t-\sqrt{t^2-1})
\end{align*}
Since $x,y \in \Omega$, we note that either 
\[1-y<x<  \min\left\{1,y(t-\sqrt{t^2-1})\right\}\] or 
\[\max\left\{1-y,y(t+\sqrt{t^2-1})\right\}<x<1.\]
Depending on the bounds of $x$ and $y$, we have the following cases: \\
\textit{Case I.} $1-y<x<  \min\left\{1,y(t-\sqrt{t^2-1})\right\}. $\\
  As $y$ and $t-\sqrt{t^2-1}=\frac{1}{t+\sqrt{t^2+1}}<1$, we note that $y(t-\sqrt{t^2-1})<1$ and therefore \[\min\left\{1,y(t-\sqrt{t^2-1})\right\}=y(t-\sqrt{t^2-1}).\]This implies $y>\frac{1}{t+1-\sqrt{t^2-1}}$. Also, $y(t-\sqrt{t^2-1})<1$ implies $y<(t-\sqrt{t^2-1})^{-1}=t+\sqrt{t^2-1}$, which always holds since $y<1$ and $t \gg 1$. And so in this case the range for $x$ and $y$ is given by,
\begin{align*}
\frac{1}{t+1-\sqrt{t^2-1}}< y<1 \text{ and } \ 
1-y<x<  y(t-\sqrt{t^2-1}). \numberthis\label{euc5}
\end{align*}
\textit{Case II.} $\max\left\{1-y,y(t+\sqrt{t^2-1})\right\}<x<1$. 
Here, we consider two cases. \\
 \textit{Sub case I.} $\max\left\{1-y,y(t+\sqrt{t^2-1})\right\}=1-y.$ Then 
 \[y<\frac{1}{t+\sqrt{t^2-1}+1}\]
and the range for $x$ and $y$ is given by
\begin{align*}
0< y< \frac{1}{t+\sqrt{t^2-1}+1} \text{ and } 1-y< x< 1. \numberthis\label{dist1}
\end{align*}
\textit{Sub Case II.} When $\max\left\{1-y,y(t+\sqrt{t^2-1})\right\}=y(t+\sqrt{t^2-1})$. This implies
\[y>\frac{1}{t+\sqrt{t^2-1}+1} \text{ and } y(t+\sqrt{t^2-1})<1,\] and so we get,
\[\frac{1}{t+\sqrt{t^2-1}+1}<y<\frac{1}{(t+\sqrt{t^2-1})} \text{ and } y(t+\sqrt{t^2-1})< x< 1. \numberthis\label{dist2}\]
Combining all the above cases, from \eqref{euc5}, \eqref{dist1} and \eqref{dist2}, we have 
\begin{align*}
G_F(t)&=2\int_{\frac{1}{t-\sqrt{t^2-1}+1}}^1 \int_{1-y}^{y(t-\sqrt{t^2-1})}~dx~dy+2\int_0^{\frac{1}{t+\sqrt{t^2-1}+1}}\int_{1-y}^{1}~dx~dy\\&\quad+
2\int_{\frac{1}{t+\sqrt{t^2-1}+1}}^{\frac{1}{t+\sqrt{t^2-1}}}\int_{y(t+\sqrt{t^2-1})}^{1}~dx~dy\\
&=\frac{1}{(t+\sqrt{t^2-1})(t+1+\sqrt{t^2-1})}+\frac{1}{(t+\sqrt{t^2-1}+1)^2}\\&\quad +\frac{1}{(t+\sqrt{t^2-1})(t+\sqrt{t^2-1}+1)^2}\\&=\frac{2}{(t+\sqrt{t^2-1})(t+\sqrt{t^2-1}+1)}.
\end{align*}
Thus, \[G_F(t)\sim\frac{1}{2t^2}.\]

\iffalse
\subsection{Euclidean slope:}
Recall that in this case the function $ F $ is given by
\[F(x,y)=\frac{1}{2}\left(\frac{x}{y}-\frac{y}{x}\right).\]
For the distribution $ G_F(t) $, note that for large $ t $, one can assume that $ y $ is very small, $ x>y $, and for $ (x,y)\in \Omega $, 
\[F(x,y)\asymp \frac{1}{y},\]
which implies that $y<1/t$.
\[ m\left(\left\{(x,y)\in\Omega: F(x,y)\ge t\right\}\right) \asymp m\left(\left\{(x,y)\in\Omega: 0<y<1/t, 1-y<x<1 \right\}\right) .\]
And the measure $ m\left(\left\{(x,y)\in\Omega: 0<y<1/t, 1-y<x<1 \right\}\right) $ is given by the integral
\[\int_0^{1/t}\int_{1-y}^{1}1 ~dx ~dy= \frac{1}{2t^2}.\]
\fi
\begin{figure}[h!]
\begin{subfigure}[t]{.45\textwidth}
\includegraphics[scale=0.5]{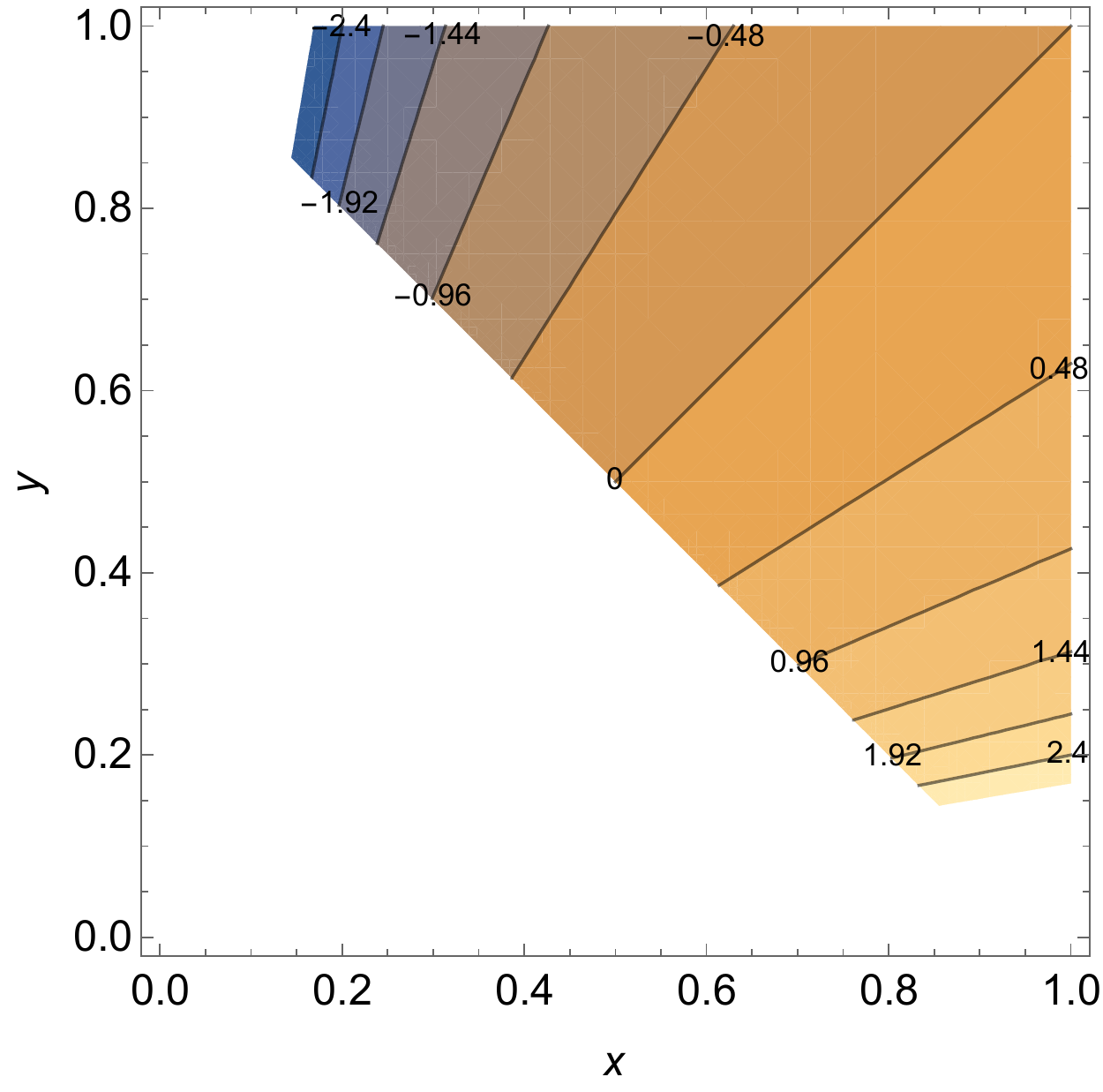}
\caption{Level curves for geometric statistic for Euclidean slope in the region $\Omega$.}
\label{fig:B}
\end{subfigure}
\begin{subfigure}[t]{.45\textwidth}
\includegraphics[scale=0.5]{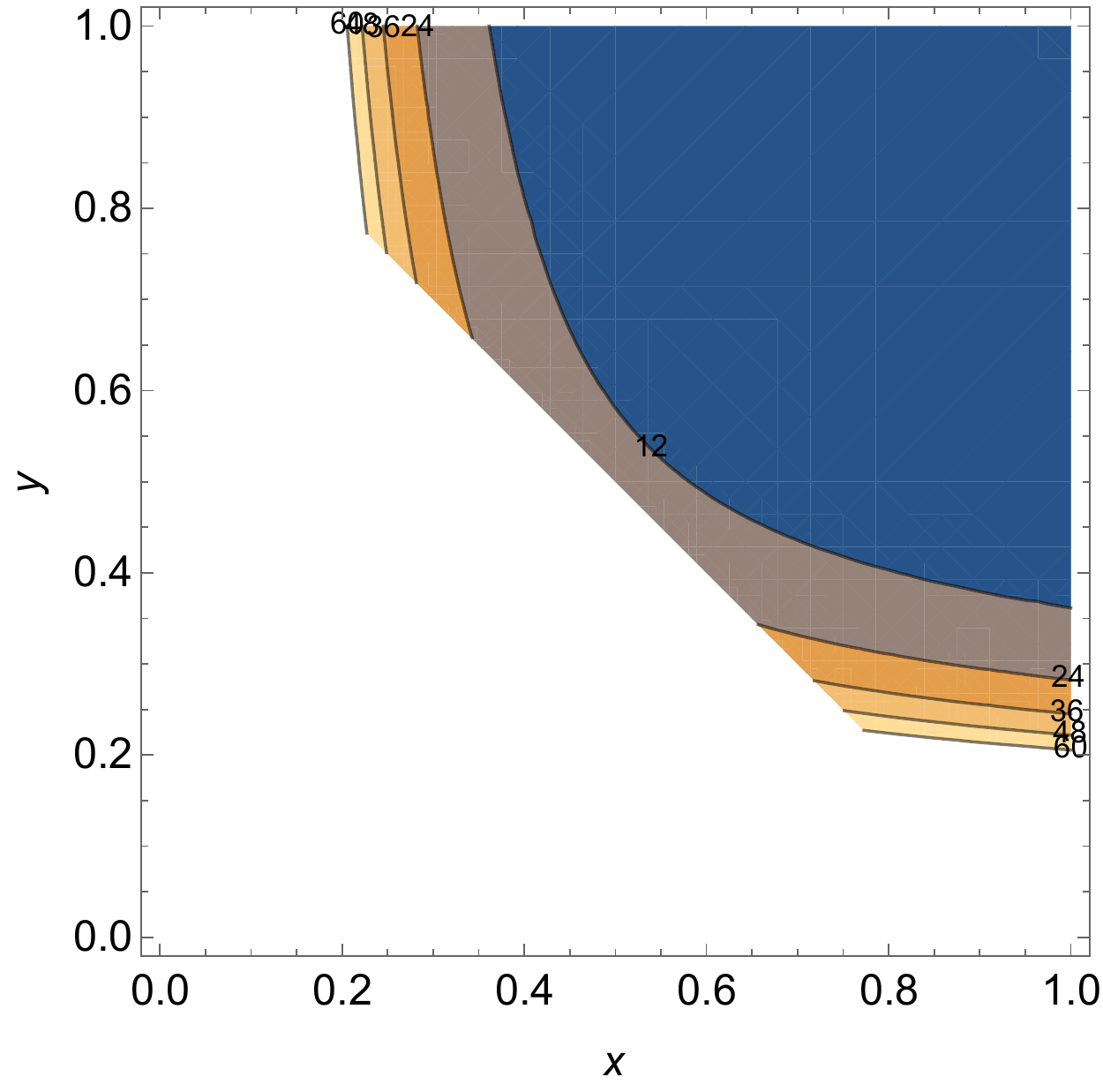}
\caption{Level curves for geometric statistic for Euclidean area in the region $\Omega$.}
\label{fig:A}
\end{subfigure}
\end{figure}

The tails for the geometric statistics for Euclidean slope, Euclidean area and hyperbolic angle can be computed in a similar manner. 

\iffalse
\subsection{Euclidean area}
For Euclidean area, the geometric statistic $ F $ is
\[\frac{1}{4} \left( \frac{1}{x^3y}+\frac{1}{xy^3} \right).\]
For large $ t $, we have $ F(x,y)>t $ when either $ F(x,y) \asymp \frac{1}{x^3} $ and $ x $ is very small, or $ F(x,y) \asymp \frac{1}{y^3} $ and $ y $ is very small. By symmetry in $ x  $ and $ y $, we assume $ x $ is small and therefore $ F(x,y) \asymp \frac{1}{x^3} $. Thus, for the distribution $ G_F(t) $, 
\begin{align*}
G_F(t)&\asymp m(\{(x,y)\in \Omega: \frac{1}{x^3}>t\})\\& =m(\{(x,y)\in \Omega: x<t^{-1/3}, y>1-x\})
=\int_{0}^{t^{-1/3}}\int_{1-x}^{1}1 ~dy ~dx\\&=\frac{1}{2t^{2/3}}.
\end{align*}
Thus, $ G_F(t)\asymp\frac{1}{t^{2/3}} .$
\subsection{Hyperbolic angle}
In this case, for
\begin{align*} 
F(x,y) =\frac{4x^5}{y^3} - 4xy+\frac{16x^3}{y} >t,
\end{align*}
$F(x,y)\asymp \frac{1}{x^5}$ for $ x $ small, and $F(x,y)\asymp \frac{1}{y^3}$ for $y$ small. As in the other statistics, one concludes that
\[G_F(t)\asymp \frac{1}{t^{2/3}}.\]
\fi

\begin{figure}[h!]
\begin{subfigure}[t]{.43\textwidth}
\includegraphics[scale=0.5]{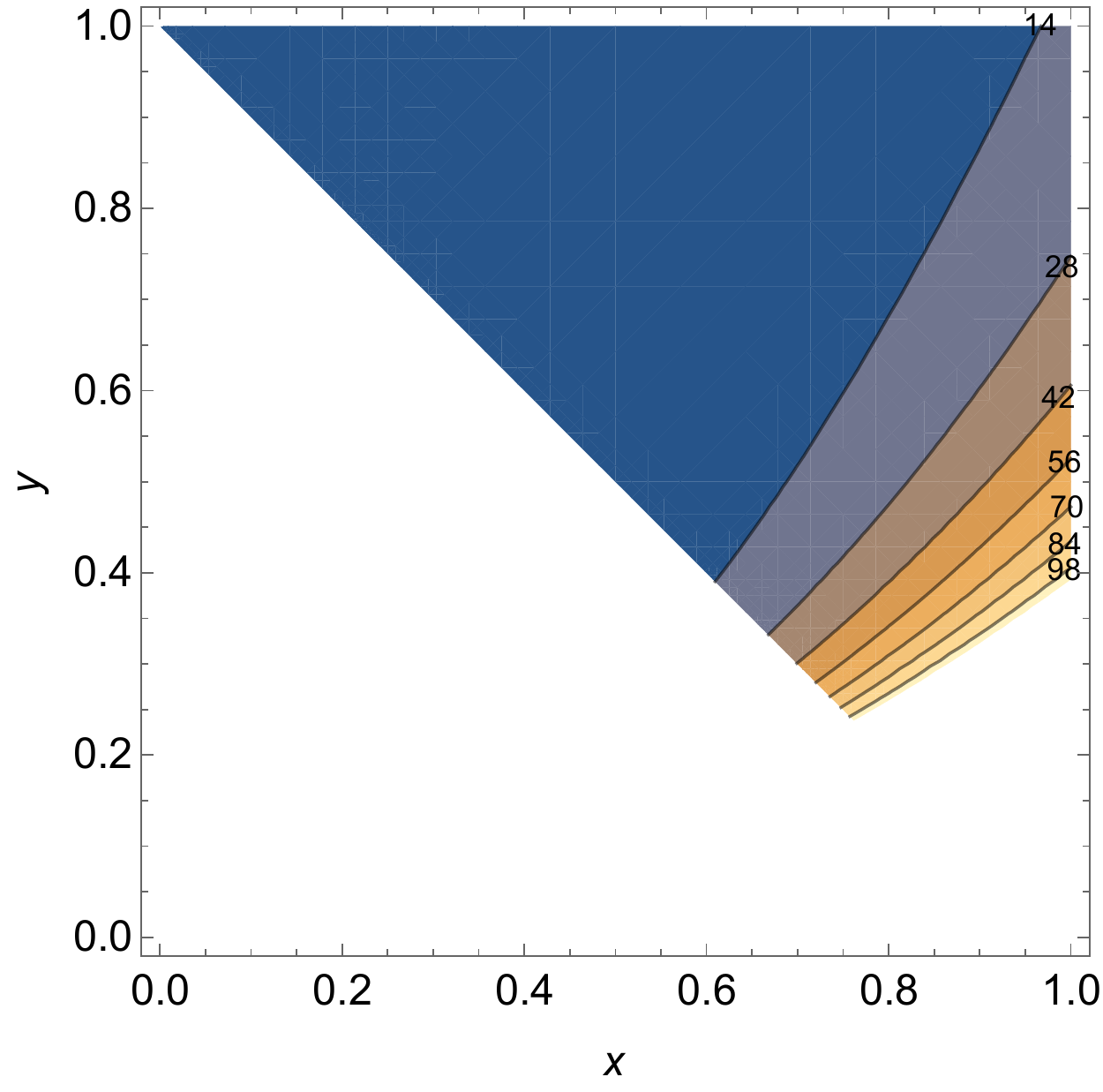}
\caption{Level curves for geometric statistic for hyperbolic angle.}
\label{fig:C}
\end{subfigure}
\end{figure}
%This completes the proof of Theorem \ref{theorem:tails}.

\iffalse
\begin{figure}[h!]
\centering
\begin{subfigure}[t]{.43\textwidth}
\includegraphics[scale=0.5]{Importantfigureseucarea.pdf}
\caption{Euclidean area}
\label{fig:A}
\end{subfigure}
\begin{subfigure}[t]{.43\textwidth}
\includegraphics[scale=0.5]{Importantfigureseucslope.pdf}
\caption{Euclidean slope}
\label{fig:B}
\end{subfigure}\\
\begin{subfigure}[t]{.43\textwidth}
\includegraphics[scale=0.5]{Importantfigureshyperang.pdf}
\caption{hyperbolic angle}
\label{fig:C}
\end{subfigure}
\caption[Level curves.]{Level curves for distribution functions in the region $\Omega$:
(\subref{fig:A}) describes the level curves for $F(x,y)=\frac{1}{2} \left( \frac{1}{x^3y}+\frac{1}{xy^3} \right)$; 
(\subref{fig:B}) describes the level curves for $F(x,y)=\frac{1}{2}\left(\frac{x}{y}-\frac{y}{x}\right)$; and,
(\subref{fig:C}) describes the level curves for $F(x,y)=\frac{4x^5}{y^3} - 4xy+\frac{16x^3}{y}$.}
\label{fig:Level}
\end{figure}
%\end{center}
\fi

\begin{ack}
The authors are grateful to the referee for many useful suggestions and comments.
\end{ack}


\begin{thebibliography}{00}
\bibitem{Apostol} {\scshape Apostol, T.M.} Introduction to analytic number theory. {\em Springer-Verlag, New York-Heidelberg,} (1976).
\bibitem{Athreya} {\scshape Athreya, J.S. ; Cheung, Y.} A Poincar\'e section for horocycle flow on the space of lattices. {\em Int. Math. Res. Not. IMRN 2014} (2014), no. 10, 2643--2690.

\bibitem{BCZ} {\scshape Boca, F.P. ; Cobeli, C. ; Zaharescu, A.}  Distribution of lattice points visible from the origin.{\em Comm. Math. Phys.} {\bf 213} (2000), no. 2, 433--470.
\bibitem{CMZ} {\scshape Chaubey, S. ; Malik, A. ; Zaharescu, A.} $k$-moments of distances between centers of Ford circles. {\em J. Math Anal. Appl. } {\bf 422} (2015), no. 2, 906--919.
\bibitem{Ester} {\scshape Estermann, T.} On Kloosterman's sum. {\em Mathematika} {\bf 8} (1961), 83--86.
\bibitem{Ford} {\scshape Ford, L.R.} Fractions. {\em Amer. Math. Monthly} {\bf 45} (1938), 586--601.
\bibitem{Hejhal2:1996} {\scshape Hejhal, D.A.} On value distribution properties of automorphic functions along closed horocycles. {\em \textit{XVI}th Rolf Nevanlinna Colloquium
(Joensuu, 1995), de Gruyter, Berlin.} (1996), 39 - 52. 

\bibitem{Hejhal3:2000} {\scshape Hejhal, D.A.} On the uniform equidistribution of long closed horocycles, Loo-Keng Hua: a great mathematician of the twentieth century, 
{\em Asian J. Math.} {\bf 4}, (2000), no. 4, 839- 853.  
\bibitem{Hooley} {\scshape Hooley, C.} An asymptotic formula in the theory of numbers. {\em Proc. London Math. Soc.} {\bf 7} (1957), 396--413.


\bibitem{KZ:1997} {\scshape Kargaev, P.P. ; Zhigljavsky, A.A.} Asymptotic distribution of the distance 
function to the Farey points.{\em J. Number Theory} {\bf 65} (1997), 130--149.  
\bibitem{Marklof} {\scshape Marklof, J.} Fine-scale statistics for the multidimensional Farey sequence, preprint: arXiv:1207.0954, 2012.
\bibitem{Sarnak:1981} {\scshape Sarnak, P.} Asymptotic behavior of periodic orbits of the horocycle flow and Eisenstein series. {\em Comm. Pure Appl. Math.} {\bf 34} (1981), no. 6, 719--739.
\bibitem{Strombergsson:2004} {\scshape Str\"ombergsson, A.} On the uniform equidistribution of long closed horocycles. {\em Duke Math. J.} {\bf 123} (2004), 507--547.  
\bibitem{Weil} {\scshape Weil, A.} On some exponential sums. {\em Proc. Nat. Acad. U.S.A.} {\bf 34} (1948), 204--207.



\end{thebibliography}
\end{document}